\DeclareMathOperator{\diam}{diam}
\DeclareMathOperator{\vol}{vol}
\newtheorem{theorem}{Theorem}[section]
\newtheorem{lemma}[theorem]{Lemma}
\newtheorem{prop}[theorem]{Proposition}
\theoremstyle{definition}
\newtheorem{remark}{Remark}
\author{Wei Zhao}
\address{Department of Mathematics\\
East China University of Science and Technology\\
Shanghai 200237, PR China}
\email{szhao\underline{ }wei@yahoo.com}
\keywords{integral curvature, Finsler manifold, Myers theorem}
\subjclass[2010]{Primary 53B40, Secondary 53C23}
\begin{document}

\title[Integral curvature bounds and diameter estimates on Finsler manifolds]{Integral curvature bounds and diameter estimates on Finsler manifolds}

\begin{abstract}
In this paper, we study the integral curvatures of Finsler manifolds and prove several Myers type theorems.
\end{abstract}
\maketitle

\section{Introduction}

The Myers theorem is one of the earliest and most fundamental theorems relating geometry
to topology and moreover,
 it also has a close connection with general relativity (cf. \cite{BE,HE}). In Finsler geometry, the Myers theorem  states that if  a forward complete Finsler $n$-manifold with   $\mathbf{Ric}\geq (n-1)K>0$, then
  $M$ is compact with $\diam(M)\leq \pi/\sqrt{K}$ (cf. \cite{BCS,Sh1}). There have been several subsequent generalizations
of this result, among which Ohta \cite{O} proved $\diam(M)\leq \pi\sqrt{{(N-1)}/{K}}$ if $\mathbf{Ric}_N \geq  K > 0$, while Yin \cite{Y} obtained similar compactness results under $\mathbf{Ric}_\infty\geq (n-1)K>0$ and additional assumptions on non-Romanian quantities.
These results   rely on uniformly positive (lower) curvature bounds.

Note that a Riemannian manifold  is a special Finsler manifold. And the works of Ambrose, Calabi, Avez,
Markvorsen, Galloway, Cheeger-Gromov-Taylor, Itokawa, Wu, Rosenberg-Yang, Sprouse and Petersen-Sprouse imply that  a Riemannian manifold could be compact if the Ricci curvature is negative in some small places (see \cite{Am,Ca,Av,M,Ga,CGT,I,Wuu,RY,S,PC}).
  Thus, it is natural to ask whether this is still true in Finsler geometry.

As far as we know, the first attempt is made by Wu \cite{Wu2}, in which one assumes positivity for the integral of the Ricci
curvature along all geodesics.
Inspired by  \cite{PC,PW,PW2,S}, we introduce a weaker integral bound for the Ricci curvature and also give an affirmative answer in this paper.

More precisely, let $(M,F)$ be a forward complete Finsler manifold endowed with either the Busemann-Hausdorff measure or the Holmes-Thompson measure $d\mathfrak{m}$. Let $\Lambda_F$ denote the uniformity constant (cf.\,\cite{E}).
Given $q\geq 1$, $K>0$ and $R>0$, set
\[
\overline{\mathcal {K}}_{d\mathfrak{m}}(q,K,R):=\sup_{x\in M}\left(\frac1{\mathfrak{m}(B^+_x(R))}\int_{B^+_x(R)}((n-1)K-\underline{\mathbf{Ric}})^q_+d\mathfrak{m}\right),
\]
where $(\cdot)_+:=\max\{\cdot,0\}$ and $\underline{\mathbf{Ric}}(x):=\min_{y\in T_xM\backslash\{0\}}\mathbf{Ric}(y)/F^2(y)$.
Obviously,  $\overline{\mathcal {K}}_{d\mathfrak{m}}(q,K,R)$ is a kind of $L^q$-Ricci curvature norm. In particular,
$\overline{\mathcal {K}}_{d\mathfrak{m}}(q,K,R)=0$ if and only if $\mathbf{Ric}\geq (n-1)K$.
And $\overline{\mathcal {K}}_{d\mathfrak{m}}(q,K,R)$ is exactly the integral curvature introduced in \cite{PW,PW2} if $F$ is Riemannian.
Now we obtain the following Myers type theorem by assuming that $\overline{\mathcal {K}}_{d\mathfrak{m}}(q,K,R)$ is small (instead of the positivity of the Ricci curvature).

\begin{theorem}\label{mysfirst}
Given any $n>1$, $q\geq1$, $k\in \mathbb{R}$, $K>0$, $R>0$ and $\delta\geq 1$, for each $\rho>0$, there exists an $\varepsilon=\varepsilon(n,q,k,K,\delta,R,\rho)>0$ such that
every    forward  complete Finsler $n$-manifold $(M,F,d\mathfrak{m})$ with
\[
\mathbf{Ric}\geq -(n-1)k^2,\ \Lambda_F\leq \delta^2,\ \overline{\mathcal {K}}_{d\mathfrak{m}}(q,K,R)<\varepsilon
\]
must satisfy
\[
\diam(M)\leq \frac{\pi}{\sqrt{K}}+\rho.
\]
In particular, the universal covering $\widetilde{M}$ is compact and hence, $\pi_1(M)$ is finite.

Moreover, if additionally suppose that $F$ is Berwaldian and $q>n/2$, then we still have
\[
\diam(M)\leq \delta^2\left( \frac{\pi}{\sqrt{K}}+\rho  \right)
\]
without assuming $\mathbf{Ric}\geq -(n-1)k^2$.
\end{theorem}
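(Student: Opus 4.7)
The plan is to adapt the Petersen-Sprouse integral-curvature approach to Myers' theorem (cf.\ \cite{PC,S}) to the non-reversible Finsler setting. Two ingredients drive the argument: (i) the second variation of arc length along a minimizing forward geodesic, used to convert the diameter bound into an integrated Ricci inequality along that geodesic, and (ii) a Finsler segment-type inequality transferring this line integral of the Ricci deficit to the volume integral defining $\overline{\mathcal{K}}_{d\mathfrak{m}}(q,K,R)$.

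Concretely, on a minimizing unit-speed forward geodesic $\gamma:[0,L]\to M$ I test the index form against the variations $V_i(t)=\sin(\pi t/L)E_i(t)$, where $\{E_i\}_{i=1}^{n-1}$ is a linearly parallel orthonormal frame along $\gamma$ orthogonal to $\dot\gamma$. Summing over $i$ and using $\mathbf{Ric}(\dot\gamma)\geq\underline{\mathbf{Ric}}(\gamma)$ yields, schematically,
\[
(n-1)K\cdot\tfrac{L}{2}-(n-1)\tfrac{\pi^{2}}{2L}\leq\int_{0}^{L}\sin^{2}(\pi t/L)\bigl((n-1)K-\underline{\mathbf{Ric}}(\gamma(t))\bigr)_{+}\,dt.
\]
To control the right side by $\overline{\mathcal{K}}_{d\mathfrak{m}}(q,K,R)$, I cover $\gamma$ by a chain of overlapping forward balls $B^{+}_{x_{j}}(R)$ and fatten the one-dimensional integral to an $(n-1)$-parameter family of nearby forward geodesics via the exponential map. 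The Jacobian of this fattening is bounded below by a Finsler Bishop-Gromov comparison under $\mathbf{Ric}\geq-(n-1)k^{2}$ (cf.\ \cite{Sh1}), while the asymmetry between forward and backward balls is absorbed into a multiplicative constant depending only on $\Lambda_F\leq\delta^{2}$. H\"older's inequality (using $q\geq 1$) then gives the desired $L^{q}$-bound. Choosing $\varepsilon$ small forces $L\leq\pi/\sqrt{K}+\rho$; since the hypotheses are pointwise and local, they lift to the universal cover, yielding compactness of $\widetilde M$ and finiteness of $\pi_{1}(M)$.

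For the Berwald case with $q>n/2$, the lower Ricci bound is dispensable because a Berwald connection is affine of Riemannian type, parallel transport is a linear isometry between tangent spaces, and the volume form $d\mathfrak{m}$ is parallel under it. Under these structural simplifications one can run a Moser/De Giorgi iteration on a suitable running maximum of the Ricci deficit (as in \cite{PW2}), and the condition $2q>n$ is exactly what is needed to start the iteration; the lack of a global curvature lower bound is replaced by the local Sobolev inequality provided by $\Lambda_F\leq\delta^{2}$. The overall factor $\delta^{2}$ in the diameter bound then reflects a single unavoidable conversion between forward and backward distances at the stage of patching the geodesic pieces together. I expect the main technical obstacle to be the Finsler segment inequality: tracking the Jacobian of the exponential map and the distance-reversal factor through $\Lambda_{F}$-dependent constants, while keeping the final dependence on $(n,q,k,K,\delta,R,\rho)$ explicit and uniform in $x\in M$, is the most delicate step in both parts; in the Berwald case the analogous difficulty is pushing the iteration through despite the non-reversibility of $F$ and controlling the boundary terms that arise from the asymmetric distance function.
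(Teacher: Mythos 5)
For the first part of the theorem (with the lower Ricci bound), your outline matches the paper's route: a Cheeger--Colding-type segment inequality (Theorem~\ref{CheegerColding}, proved by pushing forward through polar coordinates and handling non-reversibility via the reverse metric $\overleftarrow{F}$), combined with the second-variation/index-form computation against the $\sin(\pi t/L)$ variations and H\"older's inequality to extract an $L^{q}$ bound, then a ball-covering argument (on $B^{+}_{x}(\mathfrak{R})$, not on $\gamma$ itself) to pass from $R>\pi/\sqrt{K}$ to arbitrary $R$, and a fundamental-domain argument to lift to $\widetilde{M}$. The details differ slightly (the paper picks small balls $A_{i}=B^{+}_{p}(r)$, $B^{+}_{p'}(r)$ and extracts a single minimizing pair $(\tilde{x}_{1},\tilde{x}_{2})$ rather than chaining balls along $\gamma$), but the mechanism is the same.

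For the Berwald case your proposed route diverges from the paper and, as sketched, has a gap. The paper does not run a Moser/De~Giorgi iteration on the Ricci deficit directly in the Finsler setting. Instead it exploits the fact that for Berwald metrics the Chern connection coincides with the Levi--Civita connection of the average Riemannian metric $\hat{g}$, so that $d\mathfrak{m}$ and $d\vol_{\hat{g}}$ are uniformly comparable (Proposition~\ref{uniformtorsion} and (\ref{4.11newnew})) and the Ricci deficit of $\hat{g}$ is dominated by $\delta^{2}\bigl((n-1)K-\underline{\mathbf{Ric}}\bigr)_{+}$. After establishing the volume-ratio control (\ref{4.6C}) via the appendix Theorem~\ref{localver}, this gives a bound on the \emph{Riemannian} integral curvature $\widehat{\overline{\mathcal{K}}}_{d\vol}(q,K/\delta^{2},\delta R)$, and then the Petersen--Sprouse theorem \cite[Theorem 1.1]{PC} is applied wholesale to $(M,\hat{g})$. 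The factor $\delta^{2}$ arises from \emph{two} separate sources --- once from replacing $K$ by $K/\delta^{2}$ in the Riemannian curvature hypothesis, and once from converting the resulting $\hat{g}$-diameter bound to an $F$-diameter bound via (\ref{2.000}) --- not from a forward/backward conversion as you suggest. Moreover, your claim that ``$\Lambda_{F}\leq\delta^{2}$ provides a local Sobolev inequality'' is not correct as stated: $\Lambda_{F}$ controls the anisotropy of $F$ but gives no volume doubling or Sobolev constant by itself; the paper supplies the needed volume-ratio control via the integral Bishop--Gromov comparison of Theorem~\ref{localver}, which is precisely where $q>n/2$ is used. If you want to keep your direct-iteration strategy you would still need an analogue of Theorem~\ref{localver}, so the reduction-to-$\hat{g}$ route is both shorter and less error-prone.
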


\section{Preliminaries}\label{section2}
In this section, we recall some definitions and properties about Finsler manifolds. See \cite{BCS,Sh1} for more details.

\subsection{Finsler manifolds}
A Finsler $n$-manifold $(M,F)$ is an $n$-dimensional differential manifold $M$ equipped with a Finsler metric $F$ which is a nonnegative function on $TM$ satisfying the following two conditions:

(1) $F$ is positively homogeneous, i.e., $F(\lambda y)=\lambda F(y)$, for any $\lambda>0$ and $y\in TM$;

(2) $F$ is smooth on $TM\backslash\{0\}$ and the Hessian $\frac{1}{2}[F^2]_{y^iy^j}(x,y)$ is positive definite, where $F(x,y):=F(y^i\frac{\partial}{\partial x^i}|_x)$.

Let $\pi:PM\rightarrow M$ and $\pi^*TM$ be the projective sphere bundle and the pullback bundle, respectively. Then
a Finsler metric $F$ induces a natural Riemannian metric $g=g_{ij}(x,[y])\,d\mathfrak{x}^i\otimes d\mathfrak{x}^j$, which is  the so-called {\it fundamental tensor}, on  $\pi^*TM$, where
\[
g_{ij}(x,[y]):=\frac12\frac{\partial^2
F^2(x,y)}{\partial y^i\partial
y^j}, \ d\mathfrak{x}^i=\pi^*d x^i.
\]

Egloff \cite{E} introduced the   the {\it uniformity constant} $\Lambda_F$ to describe the inner products induced by $g_y$. More precisely,   set
\[
 \Lambda_F:=\underset{X,Y,Z\in SM}{\sup}\frac{g_X(Y,Y)}{g_Z(Y,Y)},
\]
where $S_xM:=\{y\in T_xM:F(x,y)=1\}$ and $SM:=\cup_{x\in M}S_xM$. Clearly, ${\Lambda_F} \geq  1$ with equality if and only if $F $ is Riemannian.

The {\it average Riemannian metric} $\hat{g}$ induced by $F$  is defined by
\[
\hat{g}(X,Y):=\frac{1}{\nu(S_xM)}\int_{S_xM}g_y(X,Y)d\nu_x(y),\ \forall\,X,Y\in T_xM,
\]
where $\nu(S_xM)=\int_{S_xM}d\nu_x(y)$, and $d\nu_x$ is the Riemannian volume form of $S_xM$ induced by $F$.
It is noticeable that
\[
\Lambda^{-1}_F\cdot F^2(X)\leq \hat{g}(X,X)\leq \Lambda_F \cdot F^2(X),\tag{2.1}\label{2.000}
\]
with equality if and only if $F$ is Riemannian.

\subsection{Curvatures}
Let $(x,y)=(x^i,y^i)$ be
local homogenous coordinates for $PM$. Define
\begin{align*}
&\ell^i:=\frac{y^i}{F},\ g_{ij}(x,y):=\frac12\frac{\partial^2
F^2(x,y)}{\partial y^i\partial
y^j},&A_{ijk}(x,y):=\frac{F}{4}\frac{\partial^3 F^2(x,y)}{\partial
y^i\partial y^j\partial y^k},\\
&\gamma^i_{jk}:=\frac12 g^{il}\left(\frac{\partial g_{jl}}{\partial
x^k}+\frac{\partial g_{kl}}{\partial x^j}-\frac{\partial
g_{jk}}{\partial x^l}\right),
&N^i_j:=\left(\gamma^i_{jk}\ell^j-A^i_{jk}\gamma^k_{rs}\ell^r
\ell^s\right)\cdot F.
\end{align*}
The {\it Chern connection} $\nabla$ is defined on the pulled-back bundle
$\pi^*TM$ and its forms are characterized by the following structure
equations:

(1) Torsion freeness: $dx^j\wedge\omega^i_j=0$;

(2) Almost $g$-compatibility: $d
g_{ij}-g_{kj}\omega^k_i-g_{ik}\omega^k_j=2\frac{A_{ijk}}{F}(dy^k+N^k_l
dx^l)$.

\noindent From above, it's easy to obtain $\omega^i_j=\Gamma^i_{jk}dx^k$, and
$\Gamma^i_{jk}=\Gamma^i_{kj}$.  In particular, $F$ is called a {\it Berwald metric} if ${\partial\Gamma^i_{kj}}/{\partial y^s}=0$.

The {\it curvature form} of the Chern connection is defined as
\[
\Omega^i_j:=d\omega^i_j-\omega^k_j\wedge\omega^i_k=:\frac{1}{2}R^i_{j\,kl}d
x^k\wedge d x^l+P^i_{j\,kl}d x^k\wedge\frac{d y^l+N^l_s dx^s}{F}.
\]
Given a non-zero vector $V\in T_xM$, the {\it flag curvature} $\mathbf{K}(y,V)$ on
$(x,y)\in TM\backslash 0$ is defined as
\[
\mathbf{K}(y,V):=\frac{V^i y^jR_{jikl}(y)y^l
V^k}{g_y(y,y)g_y(V,V)-[g_y(y,V)]^2},
\]
where $R_{jikl}:=g_{is}R^s_{j\,kl}$.
The {\it Ricci curvature} of $y\in SM$
is defined by
\[
\mathbf{Ric}(y):=\underset{i}{\sum}\,K(y,e_i),
\]
where $e_1,\ldots, e_n$ is a $g_y$-orthonormal base on $(x,y)\in
TM\backslash0$. We also use the notation
\[
\underline{\mathbf{Ric}}(x):=\min_{y\in S_xM}\mathbf{Ric}(y).
\]

\subsection{Geodesics}
Let $\gamma:[0,1]\rightarrow M$ be a Lipschitz continuity path. The length of $\gamma$ is defined by
\[
L_F(\gamma):=\int^1_0 F(\dot{\gamma}(t))dt.
\]
Define the distance function $d:M\times M\rightarrow [0,+\infty)$ by
$d(p,q):=\inf L_F(\gamma)$,
where the infimum is taken over all
Lipshitz continuous paths $\gamma:[a,b]\rightarrow M$ with
$\gamma(a)=p$ and $\gamma(b)=q$. Generally speaking, $d$ satisfies all axioms for a metric except symmetry, i.e., $d(p,q)\neq d(q,p)$, unless $F$ is reversible.

Given $R>0$, the {\it forward} and {\it backward metric balls} $B^+_p(R)$ and $B^-_p(R)$ are defined by
\[
B^+_p(R):=\{x\in M:\, d(p,x)<R\},\ B^-_p(R):=\{x\in M:\, d(x,p)<R\}.
\]
If $F$ is reversible, forward metric balls coincide with backward ones.

A smooth curve $\gamma(t)$ is called a (constant speed) {\it geodesic} if it satisfies
\[
\ddot{\gamma}^i+\Gamma^i_{jk}(\dot{\gamma})\dot{\gamma}^j\dot{\gamma}^k=0.
\]
In the paper, we always to use $\gamma_y(t)$ to denote the geodesic with $\dot{\gamma}_y(0)=y$.
Note that the reverse of a geodesic is usually not a geodesic unless $F$ is reversible.

Given $y\in S_pM$, the {\it cut value} $i_y$ of $y$ is defined by
\[
i_y:=\sup\{r: \,\gamma_y(t), \,0\leq t\leq r \text{ is globally minimizing}  \}.
\]
The {\it injectivity radius} at $p$ is defined as $\mathfrak{i}_p:=\inf_{y\in S_pM} i_y$, whereas the {\it cut locus} of $p$ is
\[
\text{Cut}_p:=\left\{\exp_p(i_y\cdot y):\,y\in S_pM \text{ with }i_y<\infty \right\}.
\]
It should be remarked that $\text{Cut}_p$ is closed and null Lebesgue measure.

\subsection{Measures}

There is only one reasonable notion of the measure for
Riemannian manifolds. However, the measures on a Finsler manifold can be defined in various ways, since the determinant of the fundamental tensor  depends on the direction of $y$. There are  two measures used frequently in Finsler geometry, which are the so-called
 {\it Busemann-Hausdorff measure} $d\mathfrak{m}_{BH}$ and {\it Holmes-Thompson measure} $d\mathfrak{m}_{HT}$. They are defined by
\begin{align*}
&d\mathfrak{m}_{BH}:=\frac{\vol(\mathbb{B}^{n})}{\vol(B_xM)}dx^1\wedge\cdots\wedge dx^n,\\
 &d\mathfrak{m}_{HT}:=\left(\frac1{\vol(\mathbb{B}^{n})}\int_{B_xM}\det g_{ij}(x,y)dy^1\wedge\cdots\wedge dy^n \right) dx^1\wedge\cdots\wedge dx^n,
\end{align*}
where $B_xM:=\{y\in T_xM: F(x,y)<1\}$. Each of them becomes the canonical Riemannian measure if $F$ is Riemannian. However, their properties are different.  Even in the reversible case,
$d\mathfrak{m}_{BH}\leq d\mathfrak{m}_{HT}$ with equality if and only if $F$ is Riemannian (cf. \cite{AlB}).

Let $d\mathfrak{m}$ be a measure on $M$. In a local coordinate system $(x^i)$,
express $d\mathfrak{m}=\sigma(x)dx^1\wedge\cdots\wedge dx^n$.
Define the {\it distortion} of $(M,F,d\mathfrak{m})$ as
\begin{equation*}
\tau(y):=\log \frac{\sqrt{\det g_{ij}(x,y)}}{\sigma(x)}, \text{ for $y\in T_xM\backslash\{0\}$}.
\end{equation*}
And the {\it S-curvature} $\mathbf{S}$ is defined by
\begin{equation*}
\mathbf{S}(y):=\frac{d}{dt}[\tau(\dot{\gamma}_y(t))]|_{t=0}.
\end{equation*}
It is easy to see both the distortion and the S-curvature vanish in the Riemannian case.
According to \cite{Sh1,ZY3}, the S-curvatures of $d\mathfrak{m}_{BH}$ and $d\mathfrak{m}_{HT}$ always vanish when $F$ is Berwaldian.
The following result is useful in this paper (cf. \cite[Lemma 2.1]{YZ}).
\begin{prop}\label{uniformtorsion}
Let $(M,F)$ be a Finsler $n$-manifold with $\Lambda_F\leq \delta^2$. Then we have
$\delta^{-2n}\leq e^\tau\leq \delta^{2n}$, where $\tau$ is the distortion of either the Busemann-Hausdorff measure or the Holmes-Thompson measure.
\end{prop}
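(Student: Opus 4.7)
The plan is to compute $e^{\tau(y)}=\sqrt{\det g_{ij}(x,y)}/\sigma(x)$ in each of the two cases and use that $\tau$ is $0$-homogeneous in $y$, so we may assume $y\in S_xM$. The workhorse is the quadratic-form inequality implied by $\Lambda_F\leq\delta^2$: for every $X,Z\in S_xM$ and every $V\in T_xM\setminus\{0\}$,
\[
\delta^{-2}\,g_Z(V,V)\leq g_X(V,V)\leq \delta^2\,g_Z(V,V),
\]
and specializing $X=V/F(V)$ gives $\delta^{-2}g_Z(V,V)\leq F^2(V)\leq\delta^2 g_Z(V,V)$. Consequently the Finsler unit ball is sandwiched between two $g_Z$-ellipsoids: $\{g_Z(V,V)<\delta^{-2}\}\subset B_xM\subset\{g_Z(V,V)<\delta^2\}$.

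For the Busemann--Hausdorff case I would take $Z=y$. The $g_y$-unit ball has Euclidean (coordinate) volume $\vol(\mathbb{B}^n)/\sqrt{\det g_{ij}(x,y)}$, so the sandwich yields
\[
\delta^{-n}\,\frac{\vol(\mathbb{B}^n)}{\sqrt{\det g_{ij}(x,y)}}\leq \vol(B_xM)\leq \delta^{n}\,\frac{\vol(\mathbb{B}^n)}{\sqrt{\det g_{ij}(x,y)}}.
\]
Inserting this into $\sigma_{BH}(x)=\vol(\mathbb{B}^n)/\vol(B_xM)$ gives $\delta^{-n}\leq e^{\tau_{BH}(y)}\leq\delta^n$, which lies inside $[\delta^{-2n},\delta^{2n}]$.

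For the Holmes--Thompson case, the direct approach---bounding $\det g_{ij}(x,\cdot)$ pointwise on $B_xM$ by $\delta^{\pm 2n}\det g_{ij}(x,y)$ and combining with the volume estimate for $B_xM$---loses a factor and yields only $\delta^{\pm 3n}$. Instead I would pass to the Legendre/symplectic side. The Legendre transform $\mathcal{L}(y)_i=g_{ij}(x,y)y^j$ has Jacobian $\partial\xi_i/\partial y^k=g_{ik}(x,y)$, since the extra term $(\partial g_{ij}/\partial y^k)y^j=2A_{ijk}y^j/F$ vanishes by the Cartan identity $A_{ijk}y^j=0$ (an immediate consequence of $0$-homogeneity of $g_{ij}$). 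Hence
\[
\int_{B_xM}\det g_{ij}(x,y')\,dy'=\vol(\mathcal{L}(B_xM))=\vol(B_x^*M),
\]
where $B_x^*M:=\{\xi\in T_x^*M:F^*(\xi)<1\}$, so $\sigma_{HT}(x)=\vol(B_x^*M)/\vol(\mathbb{B}^n)$. Inverting the matrix version of the first display shows $\Lambda_{F^*}\leq\delta^2$; since the fundamental dual tensor at $\mathcal{L}(y)$ has matrix $g^{ij}(x,y)$ with determinant $1/\det g_{ij}(x,y)$, repeating the Busemann--Hausdorff argument on the cotangent side yields $\vol(B_x^*M)\in[\delta^{-n},\delta^n]\cdot\vol(\mathbb{B}^n)\sqrt{\det g_{ij}(x,y)}$, whence $e^{\tau_{HT}(y)}\in[\delta^{-n},\delta^n]\subset[\delta^{-2n},\delta^{2n}]$.

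The hard part will be the Holmes--Thompson case: a direct estimate of $\int_{B_xM}\det g_{ij}$ is too lossy, and the cleanest route is to recognize this integral as the symplectic volume of the dual unit ball, thereby reducing the problem to a Busemann--Hausdorff-type argument on $T^*_xM$.
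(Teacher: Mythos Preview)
The paper does not supply its own proof of this proposition; it merely cites \cite[Lemma~2.1]{YZ}. Your argument is correct and self-contained, and in fact it yields the sharper bound $e^{\tau}\in[\delta^{-n},\delta^{n}]$ for both measures, which is stronger than the stated $[\delta^{-2n},\delta^{2n}]$. The Busemann--Hausdorff case is immediate from the ellipsoid sandwich, and your Holmes--Thompson argument via the Legendre transform is the clean way to avoid the lossy $\delta^{\pm 3n}$ estimate: the identity $\sigma_{HT}(x)=\vol(B_x^*M)/\vol(\mathbb{B}^n)$ is standard (your Jacobian computation $\partial\xi_i/\partial y^k=g_{ik}$ is simply $\tfrac12\partial^2_{y^iy^k}F^2$), and the dual uniformity bound $\Lambda_{F^*}\leq\delta^2$ follows by inverting the matrix inequality $\delta^{-2}g_z\leq g_y\leq\delta^2 g_z$. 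Since there is no in-paper proof to compare against, the only remark is that your route is slightly more refined than what the statement requires.
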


In the following, $v(n,k,r)$ is used to denote the volume of $r$-ball in the space form $\mathbb{M}^n_k$, i.e.,
\[
v(n,k,r):=\vol({\mathbb{S}^{n-1}})\int^r_0\mathfrak{s}^{n-1}_k(t)dt.
\]

\section{Proof of Theorem \ref{mysfirst}}

In the sequel, we  use $d\mathfrak{m}$ to denote either the Busemann-Hausdorff measure or the Holmes-Thompson measure.

In order to prove Theorem \ref{mysfirst}, we need to generalize the so-called segment inequality \cite[Theorem 2.1]{CC} to the Finsler setting.

\begin{theorem}\label{CheegerColding}
Let $(M,F,d\mathfrak{m} )$ be a forward complete Finsler $n$-manifold with
\[
\Lambda_F\leq \delta^2,\ \mathbf{Ric}\geq (n-1)k.
\]
Let $A_i$, $i=1,2$ be two bounded open subsets and let $W$ be an open subset such that for each two points $x_i\in A_i$, $i=1,2$, the normal minimal geodesic $\gamma_{x_1x_2}$ from $x_1$ to $x_2$ is contained in $W$. Thus, for any non-negative integrable function $f$ on $W$, we have
\begin{align*}
&\int_{A_1\times A_2}\left(\int^{d(x_1,x_2)}_0 f(\gamma_{x_1x_2}(s))ds\right) d\mathfrak{m}_{\times}\\
\leq& C(n,\delta,k,D)\left[ \mathfrak{m}(A_1)\diam(A_2)+\mathfrak{m}(A_2)\diam(A_1)\right]\int_W f d\mathfrak{m},
\end{align*}
where $d\mathfrak{m}_{\times}$ is the product measure induced by $d\mathfrak{m}$, $D:=\sup_{x_1\in A_1,\, x_2\in A_2}d(x_1,x_2)$ and
\[
C(n,k,\delta,D)=\delta^{4n}\sup_{0<\frac12r\leq s\leq r\leq D}\left(\frac{\mathfrak{s}_k(r)}{\mathfrak{s}_k(s)}\right)^{n-1}.
\]
\end{theorem}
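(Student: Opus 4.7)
The plan is to adapt Cheeger--Colding's original argument to the Finsler setting by splitting each minimizing geodesic $\gamma_{x_1x_2}$ at its midpoint and estimating each half via exponential polar coordinates based at the \emph{closer} endpoint. For the ``second half'' $s\in[d(x_1,x_2)/2,d(x_1,x_2)]$, fix $x_1\in A_1$ and parameterize $A_2\ni x_2=\exp_{x_1}(ry)$ with $y\in S_{x_1}M$ and $r=d(x_1,x_2)$, so that $d\mathfrak{m}(x_2)=\rho(r,y)\,dr\,d\nu_{x_1}(y)$ with $\rho(r,y)=e^{-\tau(\dot\gamma_y(r))}J(r,y)$, where $J$ is the Riemannian Jacobian of $\exp_{x_1}$. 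Applying Fubini and using that $s\in[r/2,r]$ is equivalent to $r\in[s,2s]$, the second-half contribution (for fixed $x_1$) equals
\[
\int_{S_{x_1}M}d\nu(y)\int_0^D f(\gamma_y(s))\left(\int_{E(y)\cap[s,2s]}\rho(r,y)\,dr\right)ds,
\]
with $E(y):=\{r>0:\exp_{x_1}(ry)\in A_2\}$.

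The decisive estimate is a density comparison. Standard Finsler Jacobi-field analysis under $\mathbf{Ric}\geq(n-1)k$ gives $J(r,y)/J(s,y)\leq(\mathfrak{s}_k(r)/\mathfrak{s}_k(s))^{n-1}$ for $0<s\leq r$, and combining with Proposition~\ref{uniformtorsion} yields
\[
\rho(r,y)\leq \delta^{4n}\bigl(\mathfrak{s}_k(r)/\mathfrak{s}_k(s)\bigr)^{n-1}\rho(s,y).
\]
Restricting to $r\in[s,2s]$ confines the comparison to the regime $r/2\leq s\leq r$, which bounds this factor by the stated $C(n,k,\delta,D)$. Moreover, any two points lying on a common $\gamma_y$-ray and belonging to $A_2$ are at $F$-distance at most $\diam(A_2)$ apart, so $|E(y)\cap[s,2s]|\leq\diam(A_2)$. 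Since $\gamma_y(s)\in W$ whenever the inner integral is nonzero (the geodesics $\gamma_{x_1x_2}$ lie in $W$ by hypothesis), these observations combine to give
\[
\int_{A_2}\int_{d(x_1,x_2)/2}^{d(x_1,x_2)}f(\gamma_{x_1x_2}(s))\,ds\,d\mathfrak{m}(x_2)\leq C(n,k,\delta,D)\,\diam(A_2)\int_W f\,d\mathfrak{m},
\]
and integrating over $x_1\in A_1$ produces the $\mathfrak{m}(A_1)\diam(A_2)$ term.

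For the first half $s\in[0,d(x_1,x_2)/2]$ one performs the symmetric argument with $x_2$ as the base point, which in the non-reversible Finsler setting requires passing to the reverse metric $\overleftarrow F(y):=F(-y)$ and its exponential map at $x_2$. The hypotheses transfer cleanly: $\mathbf{Ric}\geq(n-1)k$ is a condition in all tangent directions, and $\Lambda_{\overleftarrow F}=\Lambda_F$, so Proposition~\ref{uniformtorsion} again controls the distortion. Re-running the polar-coordinate / Fubini / Bishop-comparison chain in $\overleftarrow F$ and integrating over $x_2\in A_2$ produces the $\mathfrak{m}(A_2)\diam(A_1)$ term; summing the two halves yields the claimed inequality. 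The main obstacle is exactly this non-reversibility: one must verify that the Jacobi-field comparison and the distortion estimate (originally stated for $F$) continue to apply along reverse geodesics, and that the $d_F$-diameter of $A_1$ still governs the length of the reverse radial slice $\widetilde E$. Cut loci create no difficulty since they are null for $d\mathfrak{m}$.
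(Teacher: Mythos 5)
Your proposal is correct and reproduces the paper's argument essentially verbatim: split each minimizing segment at its midpoint, estimate the outer half in polar coordinates at $x_1$ using the volume comparison $e^{\tau(r,y)}\hat\sigma_{x_1}(r,y)/\mathfrak{s}_k^{n-1}(r)\leq e^{\tau(s,y)}\hat\sigma_{x_1}(s,y)/\mathfrak{s}_k^{n-1}(s)$ for $s\leq r$ together with Proposition~\ref{uniformtorsion} to absorb the distortion into $\delta^{4n}$, bound the length of the radial slice $E(y)\cap[s,2s]$ by $\diam(A_2)$, and handle the inner half by running the identical argument for the reverse metric $\overleftarrow F$ and observing that $d\overleftarrow{\mathfrak m}=d\mathfrak m$, $\Lambda_{\overleftarrow F}=\Lambda_F$, $\overleftarrow{\mathbf{Ric}}\geq(n-1)k$ and $\overleftarrow{\diam}(A_1)=\diam(A_1)$. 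The only cosmetic difference is that you record the density comparison as a bound on the Jacobian $J$ rather than directly citing the comparison theorem of \cite{ZY2}, and you should include the minimality of $\gamma_y|_{[0,r]}$ in the definition of $E(y)$ so that the slice-length bound via $\diam(A_2)$ is literal; otherwise the steps coincide with Steps 1 and 2 of the paper's proof.
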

\begin{proof}
\textbf{Step 1.}  Set
$B:=\{(x_1,x_2)\in A_1\times A_2: \ \text{there exists a unique minimal geodesic}$ $\text{from $x_1$ to $x_2$}\}$.
Since  $\mathfrak{m}_{\times}(B)=\mathfrak{m}_{\times}(A_1\times A_2)$, in the following we  view $A_1\times A_2$ as $B$. Let
\begin{align*}
E(x_1,x_2)&:=\int_0^{d(x_1,x_2)} f(\gamma_{x_1x_2}(s))ds=\int_{\frac12 d(x_1,x_2)}^{d(x_1,x_2)}+\int_0^{\frac12 d(x_1,x_2)}  f(\gamma_{x_1x_2}(s))ds\\
&=:E_1(x_1,x_2)+E_2(x_1,x_2).\tag{3.1}\label{3.1newnewnew}
\end{align*}
Given $y\in S_{x_1}M$, set
\[
I(x_1,y):=\{t:\, \gamma_{y}(t)\in A_2,\ \gamma_y|_{[0,t]}\text{ is minimal} \}.
\]
Clearly, the length of $I(x_1,y)$ is not larger than $\diam(A_2)$.
Set
\[
T(y):=\sup\{t:\, t\in I(x_1,y) \}.
\]
Let $(r,y)$ denote the polar coordinate system at $x_1$.
Since $\mathbf{Ric}\geq (n-1)k$, the volume comparison theorem (cf.\,\cite[Theorem 3.4]{ZY2}) yields
\[
\frac{e^{\tau(s,y)}\hat{\sigma}_{x_1}(s,y)}{\mathfrak{s}^{n-1}_k(s)}\geq \frac{e^{\tau(r,y)}\hat{\sigma}_{x_1}(r,y)}{\mathfrak{s}^{n-1}_k(r)},\ 0<s\leq r<i_y,
\]
which together with Proposition \ref{uniformtorsion} yields
\begin{align*}
&\int_{x_2\in A_2}E_1(x_1,x_2)d\mathfrak{m}(x_2)=\int_{S_{x_1}M} d\nu_{x_1}(y)\int_{I(x_1,y)}E_1(x_1,\exp_{x_1}(ry))\cdot \hat{\sigma}_{x_1}(r,y)\, dr\\
=&\int_{S_{x_1}M} d\nu_{x_1}(y)\int_{I(x_1,y)}\hat{\sigma}_{x_1}(r,y)dr\int^{r}_{\frac12 r}f(\gamma_{x_1 \exp_{x_1}(ry)}(s)) ds\\
\leq &C(n,k,\delta,D)\int_{S_{x_1}M} d\nu_{x_1}(y)\int_{I(x_1,y)}dr\int^{r}_{\frac12 r}f(\gamma_{x_1 \exp_{x_1}(ry)}(s))\, \hat{\sigma}_{x_1}(s,y) ds\\
\leq &C(n,k,\delta,D)\diam(A_2)\int_{S_{x_1}M} d\nu_{x_1}(y)\int^{T(y)}_{0}f(\gamma_{x_1 \exp_{x_1}(ry)}(s))\, \hat{\sigma}_{x_1}(s,y) ds\\
\leq &C(n,k,\delta,D)\diam(A_2)\int_{W}f d\mathfrak{m}.
\end{align*}
Hence, we have
\[
\int_{A_1\times A_2}E_1(x_1,x_2)d\mathfrak{m}_{\times}\leq C(n,k,\delta,D)\cdot\diam(A_2)\cdot \mathfrak{m}(A_1)\int_{W}f d\mathfrak{m}.\tag{3.2}\label{3.2newnewnew}
\]

\noindent\textbf{Step 2.} In this step, we estimate
\[
E_2(x_1,x_2)=\int_0^{\frac12 d(x_1,x_2)}  f(\gamma_{x_1x_2}(s))ds.
\]
Let $\overleftarrow{F}(x,y):=F(x,-y)$ be the reverse metric (cf. \cite{BCS,O}). In the following, we use $\overleftarrow{*}$ to denote the geometric quantity $*$ in $(M,\overleftarrow{F})$.
 Let $c_{x_2x_1}(s)$ be the reverse of $\gamma_{x_1x_2}$. Thus, $c_{x_2x_1}(s)$ is a normal minimal geodesic from $x_2$ to $x_1$ in $(M,\overleftarrow{F})$ with the length $\overleftarrow{d}(x_2,x_1)=d(x_1,x_2)$. Hence,
\begin{align*}
E_2(x_1,x_2)=\int_0^{\frac12 d(x_1,x_2)}  f(\gamma_{x_1x_2}(s))ds=\int^{\overleftarrow{d}(x_2,x_1)}_{\frac12\overleftarrow{d}(x_2,x_1)} f(c_{x_2x_1}(s))ds=:\overleftarrow{E}_1(x_2,x_1).
\end{align*}
Note that $\overleftarrow{\mathbf{Ric}}\geq (n-1)k$. The same argument as above yields
\begin{align*}
\int_{A_2\times A_1}\overleftarrow{E}_1(x_2,x_1)d\overleftarrow{\mathfrak{m}}_{\times}\leq C(n,k,\delta,D)\cdot\overleftarrow{\diam}(A_1)\cdot \overleftarrow{\mathfrak{m}}(A_2)\int_{W}f d\overleftarrow{\mathfrak{m}}.
\end{align*}
On the other hand, it is easy to check that $d\overleftarrow{\mathfrak{m}}=d \mathfrak{m}$, $\overleftarrow{\mathfrak{m}}(A_2)={\mathfrak{m}}(A_2)$ and $\overleftarrow{\diam}(A_1)=\diam(A_1)$, which implies
\[
\int_{A_1\times A_2} E_2(x_1,x_2) d\mathfrak{m}_{\times}\leq C(n,k,\delta,D)\cdot{\diam}(A_1)\cdot {\mathfrak{m}}(A_2)\int_{W}f d{\mathfrak{m}}.\tag{3.3}\label{3.3newnewnew}
\]
Now we conclude the proof by (\ref{3.1newnewnew})-(\ref{3.3newnewnew}).
\end{proof}
\begin{remark}
By the comparison theorem in \cite{O}, one can see that the theorem above remains valid under a simpler assumption $\mathbf{Ric}_N\geq(N-1)k$ instead of $\Lambda_F\leq \delta^2,\ \mathbf{Ric}\geq (n-1)k$, in which case $C(n,k,\delta,D)$ is replaced by 
\[
C(N,k,D):=\sup_{0<\frac12r\leq s\leq r\leq D}\left(\frac{\mathfrak{s}_k(r)}{\mathfrak{s}_k(s)}\right)^{N-1}.
\]
\end{remark}

 Let $(M,F,d\mathfrak{m})$ be an $n$-dimensional forward complete Finsler manifold.
Given $q\geq 1$, $K>0$ and $R>0$, set
\[
\overline{\mathcal {K}}_{d\mathfrak{m}}(q,K,R):=\sup_{x\in M}\left(\frac1{\mathfrak{m}(B^+_x(R))}\int_{B^+_x(R)}((n-1)K-\underline{\mathbf{Ric}})^q_+d\mathfrak{m}\right),
\]
where $(\cdot)_+:=\max\{\cdot,0\}$. Clearly, $\overline{\mathcal {K}}_{d\mathfrak{m}}(q,K,R)$ is a kind of $L^q$-Ricci curvature norm.

In \cite{S}, Sprouse proved some compactness theorems by $L^1$-Ricci curvature bounds in the Riemannnian case. Inspired by his work, we show the following result by the $L^q$-norm ($\forall q\geq 1$).
\begin{lemma}\label{Rlesslemm}
Given any $n>1$, $q\geq 1$, $k\in \mathbb{R}$, $K>0$ and $\delta\geq 1$,
for each $R>\pi/{\sqrt{K}}$ and each $\rho\in\left(0,\frac{1}{1+\delta}\left( R-\frac{\pi}{\sqrt{K}}\right)\right)$,
 there exists $\varepsilon=\varepsilon(n,q,k,K,\delta,R,\rho)>0$ such that
every  forward   complete Finsler $n$-manifold $(M,F,d\mathfrak{m})$ with
\begin{align*}
\mathbf{Ric}\geq -(n-1)k^2,\ \Lambda_F\leq \delta^2,\ \overline{\mathcal {K}}_{d\mathfrak{m}}(q,K,R)<\varepsilon
\end{align*}
must satisfy
\[
\diam(M)\leq \frac{\pi}{\sqrt{K}}+\rho.
\]
\end{lemma}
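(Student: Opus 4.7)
The plan is to argue by contradiction, following Sprouse's strategy \cite{S} in the Riemannian case but using the Finsler segment inequality (Theorem \ref{CheegerColding}) just established to pass from the global $L^q$ bound on $((n-1)K-\underline{\mathbf{Ric}})_+$ to an integral bound along a single long minimizing geodesic. The contradiction is then produced by a Finslerian integral version of Bonnet--Myers.

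\textbf{Step 1 (Reduction to a single geodesic via the segment inequality).} Suppose $\diam(M)>\pi/\sqrt{K}+\rho$ and pick $p,q\in M$ with $d(p,q)>\pi/\sqrt{K}+\rho$. For a small parameter $r>0$ (to be fixed of order $\rho$) set
\[
A_1:=B^+_p(r),\qquad A_2:=B^-_q(r),\qquad W:=B^+_p\bigl(d(p,q)+\delta r\bigr).
\]
The bound $\Lambda_F\leq\delta^2$ and the triangle inequality imply that every minimal unit-speed geodesic from $x_1\in A_1$ to $x_2\in A_2$ lies in $W$ and has length $>\pi/\sqrt{K}+\rho-2\delta r$. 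The restriction $\rho<(R-\pi/\sqrt{K})/(1+\delta)$ ensures $W\subset B^+_p(R)$ as soon as $r$ is small enough, so the hypothesis on $\overline{\mathcal{K}}_{d\mathfrak{m}}(q,K,R)$ applies on $W$. Applying Theorem \ref{CheegerColding} to $f:=((n-1)K-\underline{\mathbf{Ric}})_+^q$ and using Proposition \ref{uniformtorsion} together with the volume comparison theorem to bound $\mathfrak{m}(A_i)$ below by a multiple of $v(n,k,r)\delta^{-2n}$ and $\mathfrak{m}(B^+_p(R))$ above by $v(n,k,R)\delta^{2n}$, a Chebyshev averaging argument produces $(x_1,x_2)\in A_1\times A_2$ whose connecting normal minimal geodesic $\gamma:[0,L]\to M$ satisfies
\[
L>\frac{\pi}{\sqrt{K}}+\frac{\rho}{2},\qquad \int_0^L\bigl((n-1)K-\mathbf{Ric}(\dot\gamma(s))\bigr)_+^q\,ds<\eta,
\]
where $\eta\to 0$ as $\varepsilon\to 0$, with all constants depending only on $n,q,k,K,\delta,R,\rho$.

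\textbf{Step 2 (Finsler integral Bonnet--Myers).} The task is reduced to showing the existence of $\eta_0=\eta_0(n,q,K,\rho)>0$ such that no minimal unit-speed geodesic of length $\geq\pi/\sqrt{K}+\rho/2$ can satisfy the displayed integral bound with $\eta\leq\eta_0$; choosing $\varepsilon$ small so that the $\eta$ from Step 1 falls below $\eta_0$ then contradicts the minimality of $\gamma$. To prove this auxiliary estimate, let $\{e_i(t)\}_{i=1}^{n-1}$ be a $g_{\dot\gamma}$-orthonormal parallel frame along $\dot\gamma$ perpendicular to $\dot\gamma$, and let $\phi\in C^1_c(0,L)$ be a nonnegative test function supported on a subinterval of length $\pi/\sqrt{K}$ and approximating the Bonnet--Myers profile $\sin(\sqrt{K}\,\cdot)$. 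Inserting the variations $V_i:=\phi\,e_i$ into the Finsler second variation formula and summing the index inequalities $I(V_i,V_i)\geq 0$ gives
\[
(n-1)\int_0^L\dot\phi^2\,ds\geq\int_0^L\phi^2\,\mathbf{Ric}(\dot\gamma)\,ds.
\]
Rearranging and using the elementary inequality $\int_0^L\bigl((n-1)K\phi^2-(n-1)\dot\phi^2\bigr)ds\geq c_1(K,\rho)>0$ for the chosen $\phi$, one deduces a positive lower bound on $\int_0^L\bigl((n-1)K-\mathbf{Ric}(\dot\gamma)\bigr)_+\phi^2\,ds$. Since $\phi$ is bounded and supported on an interval of fixed length, H\"older's inequality upgrades this $L^1$ estimate into a lower bound $\eta_0$ on the $L^q$-norm of $((n-1)K-\mathbf{Ric}(\dot\gamma))_+$, as required.

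\textbf{Main obstacle.} The heart of the argument is the variational Step 2: one must verify that Chern-connection parallel transport along $\dot\gamma$ does preserve $g_{\dot\gamma}$-orthonormality and that the trace of the second variation over such a frame yields exactly $\mathbf{Ric}(\dot\gamma)$ rather than some weighted analogue, and that the lack of reversibility of $F$ causes no issue (we reason only forward along $\gamma$). Step 1 is bookkeeping, but the dependence of $\eta$ on $\varepsilon$ must be tracked explicitly through the volume comparison and the distortion bound to ensure $\varepsilon$ depends only on the parameters $(n,q,k,K,\delta,R,\rho)$ listed in the statement.
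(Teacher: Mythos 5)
Your overall strategy --- reduce to a single long minimizing geodesic via the Finsler segment inequality and volume comparison, then contradict minimality through a second-variation (index form) computation followed by H\"older --- is the same strategy the paper uses. However, there are two concrete problems.

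First, in Step 1 you pick $p,q$ with $d(p,q)>\pi/\sqrt{K}+\rho$ and set $W:=B^+_p\bigl(d(p,q)+\delta r\bigr)$, then assert that $\rho<(R-\pi/\sqrt{K})/(1+\delta)$ forces $W\subset B^+_p(R)$ once $r$ is small. That implication is false as stated: nothing bounds $d(p,q)$ from above, and on a non-compact $M$ one could have $d(p,q)\gg R$. To make this step correct you must choose $q$ with $d(p,q)$ only slightly larger than $\pi/\sqrt{K}+\rho$ (say on a geodesic from $p$, by the intermediate value theorem); then $W\subset B^+_p(R)$ does follow. The paper sidesteps this by first considering \emph{all} $p'$ in an annulus $\pi/\sqrt{K}+\delta(3+2\delta)r<d(p,p')<R-(2+\delta)r$ to conclude boundedness (hence compactness) of $M$, and only afterwards estimates the diameter with a second pass of the argument; your contradiction route is a legitimate shortcut, but only with the careful choice of $q$ stated explicitly.

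Second, and more seriously, Step 2 is broken as written. You take $\phi$ compactly supported on a subinterval of length exactly $\pi/\sqrt{K}$, approximating $\sin(\sqrt{K}\,\cdot)$, and claim $\int_0^L\bigl((n-1)K\phi^2-(n-1)\dot\phi^2\bigr)\,ds\geq c_1(K,\rho)>0$. This is false: for any $\phi\in C^1_c$ supported on an interval of length $\pi/\sqrt{K}$, the first Dirichlet eigenvalue of that interval is exactly $K$, so $\int\dot\phi^2\geq K\int\phi^2$, i.e. $\int(K\phi^2-\dot\phi^2)\leq 0$, with equality for $\phi=\sin(\sqrt{K}(\cdot-a))$. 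To get a strictly positive integral you must exploit $L>\pi/\sqrt{K}$ and stretch the test function over the \emph{whole} interval: take $\phi(s)=\sin(\pi s/L)$ on $[0,L]$, whose Dirichlet eigenvalue is $(\pi/L)^2<K$, so that $\int(K\phi^2-\dot\phi^2)=(K-\pi^2/L^2)\,L/2>0$ and is bounded below using $L\geq\pi/\sqrt{K}+\rho/2$. This is precisely the test field $Y_\alpha(t)=\sin(\pi\sqrt{K}t/L)E_\alpha(t)$ used in the paper's (3.7). With the stretched test function your H\"older step then goes through as you describe.
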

\begin{proof}
\textbf{Step 1.} Fix a point $p\in M$ and set $W=B^+_p(R)$. Choose any point $p'\in W$ with
\[
\frac{\pi}{\sqrt{K}}+\delta(3+2\delta)r<d(p,p')<R-(2+\delta)r,\tag{3.4}\label{newnew4.1}
\]
where $r=\frac{\rho}{2(1+\delta)}$ is a fixed number.

Set $A_1:=B^+_p(r)$ and $A_2:=B^+_{p'}(r)$. By the triangle inequality, one can easily show that $A_1,A_2,W$ satisfy the conditions in Theorem \ref{CheegerColding}.
Thus, Theorem \ref{CheegerColding} yields
\begin{align*}
 &\int_{A_1\times A_2}\left(\int^{d(x_1,x_2)}_0 ((n-1)K-\underline{\mathbf{Ric}})^q_+(\gamma_{x_1x_2}(s))ds\right) d\mathfrak{m}_{\times}\\
\leq& C(n,k,\delta,R)\cdot (1+\delta)r\left[ \mathfrak{m}(A_1)+\mathfrak{m}(A_2)\right]\int_W ((n-1)K-\underline{\mathbf{Ric}})^q_+ d\mathfrak{m}.\tag{3.5}\label{3.8CC}
 \end{align*}
The volume comparison theorem (cf.\,\cite[Remark 3.5]{ZY2}) together with Proposition \ref{uniformtorsion} yields
\begin{align*}
\frac{\mathfrak{m}(A_1)}{\mathfrak{m}(W)}\geq \delta^{-4n}\frac{v(n,-k^2,r)}{v(n,-k^2,R)},\  \frac{\mathfrak{m}(A_2)}{\mathfrak{m}(B^+_{p'}((1+\delta)R))}\geq \delta^{-4n}\frac{v(n,-k^2,r)}{v(n,-k^2,(1+\delta)R)}.
\end{align*}
Since $\mathfrak{m}(B^+_{p'}((1+\delta)R))\geq \mathfrak{m}(W)$, (\ref{3.8CC}) together with the above inequalities implies
\begin{align*}
&\inf_{(x_1,x_2)\in A_1\times A_2}\int^{d(x_1,x_2)}_0((n-1)K-\underline{\mathbf{Ric}})^q_+(\gamma_{x_1x_2}(s))ds\\
\leq& C(n,k,\delta,R)\cdot (1+\delta)r \cdot \left( \frac{1}{\mathfrak{m}(A_1)} +\frac{1}{\mathfrak{m}(A_2)}  \right)\int_{W} ((n-1)K-\underline{\mathbf{Ric}})^q_+ d\mathfrak{m}\\
\leq &C(n,k,\delta,R,\rho)\cdot \frac{1}{\mathfrak{m}(W)}\int_{W} ((n-1)K-\underline{\mathbf{Ric}})^q_+ d\mathfrak{m}\\
\leq &C(n,k,\delta,R,\rho)\cdot  \overline{\mathcal {K}}_{d\mathfrak{m}}(q,K,R),
\end{align*}
where
\[
C(n,k,\delta,R,\rho):=2\delta^{4n}\cdot C(n,k,\delta,R)\cdot (1+\delta)r\cdot \frac{v(n,-k^2,(1+\delta)R)}{v(n,-k^2,r)}.
\]

Since $\overline{A_1\times A_2}$ is compact, there exist two points $\tilde{x}_i\in \overline{A}_i$, $i=1,2$ such that the normal minimal geodesic $\gamma_{\tilde{x}_1\tilde{x}_2}(t)$ satisfies
\begin{align*}
&\int^{d(\tilde{x}_1,\tilde{x}_2)}_0((n-1)K-\underline{\mathbf{Ric}})^q_+(\gamma_{\tilde{x}_1\tilde{x}_2}(s))ds\\
=&\inf_{(x_1,x_2)\in A_1\times A_2}\int^{d(x_1,x_2)}_0((n-1)K-\underline{\mathbf{Ric}})^q_+(\gamma_{x_1x_2}(s))ds\\
\leq& C(n,k,\delta,R,\rho)\cdot  \overline{\mathcal {K}}_{d\mathfrak{m}}(q,K,R)<\,C(n,k,\delta,R,\rho)\cdot \varepsilon,\tag{3.6}\label{4.0C}
\end{align*}
where $\varepsilon$ will be chosen in the sequel.

\noindent\textbf{Step 2.} Set $L:=\sqrt{K}\,d(\tilde{x}_1,\tilde{x}_2)$. Let $T:=\dot{\gamma}_{\tilde{x}_1\tilde{x}_2}$ and  $\{E_1(t),\ldots,E_{n-1}(t),T\}$ be a $g_T$-orthonormal   parallel frame field along $\gamma_{\tilde{x}_1\tilde{x}_2}$. Set
\[
Y_\alpha(t):=\sin\left( \frac{\pi \sqrt{K}}{L}t \right)\cdot E_\alpha(t), \ \alpha=1,\ldots,n-1.
\]
Let $C_\alpha(t,s)$ be the fixed-endpoint variation of curves corresponding to $Y_\alpha$ (i.e., $Y_\alpha(t)=(\partial_sC_\alpha)(t,0)$) and $\mathcal {L}_\alpha(s)$ be the length of $C_\alpha(\cdot,s)$. Then we have
\begin{align*}
&\sum_\alpha \left.\frac{d^2}{ds^2}\right|_{t=0}\mathcal {L}_\alpha=\sum_\alpha\int_0^{\frac{L}{\sqrt{K}}}g_T(\nabla_TY_\alpha,\nabla_TY_\alpha)+R_T(T,Y_\alpha,T,Y_\alpha)\,dt\\
=&-\frac{(n-1)L\sqrt{K}}{2}\left( 1-\left(\frac{\pi}{L}\right)^2\right)+\Delta,\tag{3.7}\label{4.1CCCC}
\end{align*}
where
\[
\Delta:=\int_0^{\frac{L}{\sqrt{K}}}\sin^2\left( \frac{\pi \sqrt{K}}L{t} \right)\left[ (n-1)K-\mathbf{Ric}(T) \right]\,dt.
\]
Now using the H\"older inequality and (\ref{4.0C}), one gets
\begin{align*}
\Delta
\leq &\int_0^{\frac{L}{\sqrt{K}}}\sin^2\left( \frac{\pi \sqrt{K}}{L}t \right)\left( (n-1)K-\mathbf{Ric}(T) \right)_+\,dt \\
\leq & \left(\int^{\frac{L}{\sqrt{K}}}_0\sin^{\frac{2q}{q-1}}\left( \frac{\pi \sqrt{K}t}{L} \right) dt \right)^{1-\frac1q}\left(\int^{\frac{L}{\sqrt{K}}}_0\left( (n-1)K-\mathbf{Ric}(T) \right)^q_+   dt \right)^{\frac{1}q}\\
= &C(q,n,k,K,\delta,R,\rho)\cdot L^{1-1/q}\cdot \varepsilon^{\frac1q},\tag{3.8}\label{4.2CCCC}
\end{align*}
where $C(q,n,k,K,\delta,R,\rho):=C(n,k,\delta,R,\rho)^{\frac1q}\cdot K^{\frac{1-q}{2q}}$.

\noindent\textbf{Step 3.} Now we claim $L\leq \pi+\frac{\rho}{2}\sqrt{K}$, if $\varepsilon$ is small enough.
 Suppose by contradiction that  $L> \pi+\frac{\rho}{2}\sqrt{K}$ for any $\varepsilon>0$.
 We consider some $\varepsilon>0$ with
\[
\varepsilon\leq \left[ \frac{(n-1)\sqrt{K}\left(1-\left(\frac{\pi}{\pi+\frac{\rho}{2}\sqrt{K}}\right)^2  \right)}{2\cdot C(q,n,k,K,\delta,R,\rho) }   \right]^q\cdot \left( \pi+\frac{\rho}{2}\sqrt{K} \right)=:\epsilon_1.
\]
It follows from (\ref{4.1CCCC}) and (\ref{4.2CCCC}) that
\[
\sum_\alpha \left.\frac{d^2}{ds^2}\right|_{t=0}\mathcal {L}_\alpha=-\frac{(n-1)L\sqrt{K}}{2}\left( 1-\left(\frac{\pi}{L}\right)^2\right)+\Delta<0,
\]
which is a contradiction, since $\gamma$ is a minimal geodesic. Hence, the claim is true and then the triangle inequality implies that
\begin{align*}
d(p,p')\leq d(p,\tilde{x}_1)+d(\tilde{x}_1,\tilde{x}_2)+d(\tilde{x}_2,p')\leq (1+\delta)r+\frac{L}{\sqrt{K}}<\frac{\pi}{\sqrt{K}}+\rho.
\end{align*}
Recall that $p'$ is an arbitrary point satisfying (\ref{newnew4.1}) and hence,
\[
B^+_p\left(R-(2+\delta)r\right)\subset B^+_p\left( \frac{\pi}{\sqrt{K}}+\rho \right).
\]
However, it is easy to check that
\[
R-(2+\delta)r>\frac{\pi}{\sqrt{K}}+\rho\Longrightarrow M=B^+_p\left( \frac{\pi}{\sqrt{K}}+\rho \right).
\]
In particular, $M$ is compact.

\noindent\textbf{Step 4.}  Now  we  estimate $\diam(M)$.  Since $M$ is compact, we can suppose that there exist two points $p,p'\in M$ such that
\[
D:=\diam(M)=d(p,p')>\frac{1}{1+\delta}\left( \frac{\pi}{\sqrt{K}}+\rho  \right).
\]
Otherwise, we are done.
Fix a number $r$ with
\[
0<r<\min\left\{  \frac{1}{(1+\delta)^2}\left( \frac{\pi}{\sqrt{K}}+\rho  \right),\ \frac{\rho}{2(1+\delta)}  \right\}
\]
and set
\[
R_0:=\frac{\pi}{\sqrt{K}}+\rho,\ A_1:=B^+_p(r),\ A_2:=B^+_{p'}(r),\ W:=M=B^+_p(R_0)=B^+_p(R).
\]
Since $D<(1+\delta)R_0$, the same argument as above (see (\ref{4.0C})) yields that there exist two points $\tilde{x}_i\in A_i$, $i=1,2$ with
\begin{align*}
&\int^{d(\tilde{x}_1,\tilde{x}_2)}_0((n-1)K-\underline{\mathbf{Ric}})^q_+(\gamma_{\tilde{x}_1\tilde{x}_2}(s))ds\\
=&\inf_{(x_1,x_2)\in A_1\times A_2}\int^{d(x_1,x_2)}_0((n-1)K-\underline{\mathbf{Ric}})^q_+(\gamma_{x_1x_2}(s))ds\\
\leq& C(n,k,\delta,R_0,\rho)\cdot  \frac{1}{\mathfrak{m}(B^+_p(R))}\int_{B^+_p(R)} ((n-1)K-\underline{\mathbf{Ric}})^q_+ d\mathfrak{m}\\
<& C(n,k,\delta,R_0,\rho)\cdot\varepsilon.
\end{align*}
Set $L:=\sqrt{K}\,d(\tilde{x}_1,\tilde{x}_2)$. Using the same arguments in Step 2-3, one can show that if
\[
\varepsilon\leq\left[ \frac{(n-1)\sqrt{K}\left(1-\left(\frac{\pi}{\pi+\frac{\rho}{2}\sqrt{K}}\right)^2  \right)}{2\cdot C(q,n,k,K,\delta,R_0,\rho) }   \right]^q\cdot \left( \pi+\frac{\rho}{2}\sqrt{K} \right)=:\epsilon_2,
\]
then $L\leq \pi+\frac{\rho}{2}\sqrt{K}$. Now, one gets
\[
D=d(p,p')\leq d(p,\tilde{x}_1)+d(\tilde{x}_1,\tilde{x}_2)+d(\tilde{x}_2,p')<\frac{L}{\sqrt{K}}+(1+\delta)r\leq \frac{\pi}{\sqrt{K}}+\rho.
\]
Now we conclude the proof  by choosing $\varepsilon:=\min\{\epsilon_1,\epsilon_2\}$.
\end{proof}

We now recall the definition and properties of fundamental domain. See \cite{GLP} for more details.
Let $f:\widetilde{M}\rightarrow M$ be a covering with deck
transformation group $\Gamma$. $\Omega\subset \widetilde{M}$ is called
{\it a fundamental domain }of $\widetilde{M}$ if $f(\overline{\Omega})=M$ and
$\gamma(\Omega)\cap \Omega=\emptyset$, for all $\gamma \in
\Gamma-\{1\}$.
If $\Omega$ is a fundamental domain, then
\[
\underset{\gamma\in \Gamma}%
{\cup}\gamma(\overline{\Omega})=\widetilde{M},\ f|_{\gamma(\Omega)}:%
\gamma(\Omega)\rightarrow f(\Omega)\text{ is a homeomorphism},\ \forall\gamma\in \Gamma.
\]

If $(M,F)$ is forward complete, one can get a fundamental domain as follows. For any $p\in M$,
\begin{equation*}
p\mapsto D_p\subset T_{{p}}{M}\mapsto f_*|_{\tilde{p}}^{-1}(D_p)\subset T_{%
\tilde{p}}\widetilde{M}\mapsto \exp_{\tilde{p}}(f_*|_{\tilde{p}%
}^{-1}(D_p))=:\Omega_p,\tag{3.9}\label{3.12**C}
\end{equation*}
where $\tilde{p}$ is an arbitrary point
in $f^{-1}(p)$. Thus,
 $\Omega_p$ is a fundamental domain.

On the other hand, if $d\mathfrak{m}$ is either the Busemann-Hausdorff measure or the Holmes-Thompson measure, then the pull-back measure $f^*d\mathfrak{m}$ is exactly the same kind of measure on $(\widetilde{M},f^*F)$.  By abuse of notation, $d\mathfrak{m}$ also denotes the pull-back measure.

\begin{theorem}\label{firstheorem}Given any $n>1$, $q\geq1$, $k\in \mathbb{R}$, $K>0$, $R>0$ and $\delta\geq 1$, for each $\rho>0$, there exists $\varepsilon=\varepsilon(n,q,k,K,\delta,R,\rho)>0$ such that
every    forward  complete Finsler $n$-manifold $(M,F)$ with
\[
\mathbf{Ric}\geq -(n-1)k^2,\ \Lambda_F\leq \delta^2,\ \overline{\mathcal {K}}_{d\mathfrak{m}}(q,K,R)<\varepsilon
\]
must satisfy
\[
\diam(M)\leq \frac{\pi}{\sqrt{K}}+\rho.
\]
In particular, the universal covering $\widetilde{M}$ is compact and hence, $\pi_1(M)$ is finite.
\end{theorem}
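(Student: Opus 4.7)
The plan is to extend Lemma~\ref{Rlesslemm} to arbitrary $R>0$ and $\rho>0$, and then lift the hypotheses to the universal cover $\widetilde{M}$ via a multiplicity argument to obtain the compactness of $\widetilde{M}$, from which finiteness of $\pi_1(M)$ follows.

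First, I would reduce the diameter estimate on $M$ to the regime of Lemma~\ref{Rlesslemm}. Given $R>0$ and $\rho>0$, pick $R'\geq R$ with $R'>\pi/\sqrt{K}+(1+\delta)\rho$, so that $\rho$ lies in the admissible range of the Lemma with radius $R'$. If $R'>R$, I would relate $\overline{\mathcal{K}}_{d\mathfrak{m}}(q,K,R')$ to $\overline{\mathcal{K}}_{d\mathfrak{m}}(q,K,R)$ via a Vitali-type packing: cover each forward ball $B^+_x(R')$ by $N=N(n,k,\delta,R,R')$ forward $R$-balls centered in $B^+_x(R')$, then use the Bishop--Gromov type comparison (cf.\,\cite[Theorem 3.4]{ZY2}) under $\mathbf{Ric}\geq -(n-1)k^2$ together with Proposition~\ref{uniformtorsion} to compare the volumes of these smaller balls to $\mathfrak{m}(B^+_x(R'))$, obtaining
\[
\overline{\mathcal{K}}_{d\mathfrak{m}}(q,K,R')\leq C(n,k,\delta,R,R')\cdot \overline{\mathcal{K}}_{d\mathfrak{m}}(q,K,R).
\]
Shrinking the threshold $\varepsilon$ from Lemma~\ref{Rlesslemm} by the factor $C$ then yields $\diam(M)\leq\pi/\sqrt{K}+\rho$; in particular, $M$ is forward compact.

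Second, I would pass to the universal covering $f:\widetilde{M}\to M$. The pulled-back metric $\widetilde{F}=f^*F$ is forward complete with $\widetilde{\mathbf{Ric}}\geq -(n-1)k^2$ and $\Lambda_{\widetilde{F}}\leq\delta^2$, and $f^*d\mathfrak{m}$ coincides with the Busemann--Hausdorff (resp.\ Holmes--Thompson) measure $d\widetilde{\mathfrak{m}}$. Since $f$ is a local isometry, $f(B^+_{\tilde{x}}(R))=B^+_{f(\tilde{x})}(R)$. Setting $\mu(y):=\#(f^{-1}(y)\cap B^+_{\tilde{x}}(R))$, the change of variables under $f$ yields
\[
\int_{B^+_{\tilde{x}}(R)}((n-1)K-\underline{\widetilde{\mathbf{Ric}}})^q_+\,d\widetilde{\mathfrak{m}}=\int_{B^+_{f(\tilde{x})}(R)}((n-1)K-\underline{\mathbf{Ric}})^q_+\,\mu\,d\mathfrak{m},
\]
together with $\widetilde{\mathfrak{m}}(B^+_{\tilde{x}}(R))=\int_{B^+_{f(\tilde{x})}(R)}\mu\,d\mathfrak{m}\geq \mathfrak{m}(B^+_{f(\tilde{x})}(R))$, as $\mu\geq 1$ on $B^+_{f(\tilde{x})}(R)$ (every minimal geodesic from $f(\tilde{x})$ lifts through $\tilde{x}$). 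A pointwise bound $\mu\leq M_0$ therefore gives $\overline{\mathcal{K}}_{d\widetilde{\mathfrak{m}}}(q,K,R)\leq M_0\cdot\overline{\mathcal{K}}_{d\mathfrak{m}}(q,K,R)$; applying the first step to $(\widetilde{M},\widetilde{F},d\widetilde{\mathfrak{m}})$ with $\varepsilon$ small enough that $M_0\varepsilon$ clears the required threshold then forces $\diam(\widetilde{M})<\infty$. Hence $\widetilde{M}$ is compact and $\pi_1(M)$, acting freely and properly discontinuously on a compact space, is finite.

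The main obstacle is the multiplicity bound $\mu\leq M_0$. With $D:=\diam(M)\leq\pi/\sqrt{K}+\rho$ already in hand, for any preimage $\tilde{y}_0\in f^{-1}(y)$ I would choose a fundamental domain $\Omega\subset\widetilde{M}$ containing $\tilde{y}_0$ with $\diam(\Omega)\leq 2D$ (cf.\,(\ref{3.12**C})). Then $f^{-1}(y)\cap B^+_{\tilde{x}}(R)=\{\gamma\tilde{y}_0\,:\,\gamma\in\Gamma,\ d(\tilde{x},\gamma\tilde{y}_0)<R\}$, the translates $\{\gamma\Omega\}_{\gamma\in\Gamma}$ are pairwise disjoint of $d\widetilde{\mathfrak{m}}$-volume $\mathfrak{m}(M)$ each, and $\gamma\tilde{y}_0\in B^+_{\tilde{x}}(R)$ forces $\gamma\Omega\subset B^+_{\tilde{x}}(R+2D)$. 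Using Bishop--Gromov together with Proposition~\ref{uniformtorsion} to bound $\widetilde{\mathfrak{m}}(B^+_{\tilde{x}}(R+2D))\leq \delta^{4n}v(n,-k^2,R+2D)$, one gets $\mu(y)\leq \delta^{4n}v(n,-k^2,R+2D)/\mathfrak{m}(M)$. A uniform lower bound on $\mathfrak{m}(M)$---extracted from $\Lambda_F\leq\delta^2$, the Ricci lower bound, and the diameter bound on $M$---then supplies the desired $M_0=M_0(n,k,\delta,K,R,\rho)$. Establishing this uniform volume lower bound is the technical heart of the proof.
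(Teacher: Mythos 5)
Your Step~1 (extending Lemma~\ref{Rlesslemm} to arbitrary $R$ by choosing $\mathfrak{R}>\pi/\sqrt{K}$ and running a Vitali covering with the Bishop--Gromov comparison and Proposition~\ref{uniformtorsion}) is essentially the same as the paper's Step~1, and is fine.

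Step~2 contains a genuine gap. Your plan is to bound the multiplicity $\mu$ pointwise by $M_0\sim\delta^{4n}v(n,-k^2,R+2D)/\mathfrak{m}(M)$ and then extract a \emph{uniform} lower bound on $\mathfrak{m}(M)$ from the hypotheses. No such bound exists: consider the round spheres $S^n(r)$ with $r<1/\sqrt{K}$. Each satisfies $\Lambda_F=1\leq\delta^2$, $\mathbf{Ric}=(n-1)/r^2\geq(n-1)K>-(n-1)k^2$, hence $\overline{\mathcal{K}}_{d\mathfrak{m}}(q,K,R)=0<\varepsilon$ for any $\varepsilon>0$, yet $\mathfrak{m}(M)\to 0$ as $r\to 0$. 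So the quantity you call ``the technical heart of the proof'' is unattainable, and as written the argument stalls.

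The repair is to not bound $\mathfrak{m}(M)$ and $\mu$ separately but to let the two occurrences of $\mathfrak{m}(M)$ cancel, which is exactly what the paper does. Once Step~1 gives $\diam(M)<R$, every forward $R$-ball is all of $M$, so $\overline{\mathcal{K}}_{d\mathfrak{m}}(q,K,R)=\frac{1}{\mathfrak{m}(M)}\int_M((n-1)K-\underline{\mathbf{Ric}})_+^q\,d\mathfrak{m}$. Covering $B^+_{\tilde{x}}(R)$ by $N$ translates $\gamma_i(\Omega)$ of a fundamental domain with $B^+_{\tilde{x}}(R)\subset\cup_{i=1}^N\gamma_i(\Omega)\subset B^+_{\tilde{x}}((2+\delta)R)$, the disjointness of the $\gamma_i(\Omega)$ and $\widetilde{\mathfrak{m}}(\gamma_i(\Omega))=\mathfrak{m}(M)$ give $N\,\mathfrak{m}(M)\leq\widetilde{\mathfrak{m}}(B^+_{\tilde{x}}((2+\delta)R))\leq C'(n,k,\delta,R)\,\widetilde{\mathfrak{m}}(B^+_{\tilde{x}}(R))$ by Bishop--Gromov and Proposition~\ref{uniformtorsion}. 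Then
\[
\frac{1}{\widetilde{\mathfrak{m}}(B^+_{\tilde{x}}(R))}\int_{B^+_{\tilde{x}}(R)}((n-1)K-\widetilde{\underline{\mathbf{Ric}}})_+^q\,d\widetilde{\mathfrak{m}}
\leq\frac{N}{\widetilde{\mathfrak{m}}(B^+_{\tilde{x}}(R))}\int_M((n-1)K-\underline{\mathbf{Ric}})_+^q\,d\mathfrak{m}
\leq C'\cdot\overline{\mathcal{K}}_{d\mathfrak{m}}(q,K,R),
\]
and $C'$ depends only on $n,k,\delta,R$. Your change-of-variables formulation via $\mu$ is equivalent and salvageable in the same way: use the numerator bound $\int_{B^+_{\tilde{x}}(R)}(\cdot)\,d\widetilde{\mathfrak{m}}\leq\mu_{\max}\int_M(\cdot)\,d\mathfrak{m}$ together with $\mu_{\max}\,\mathfrak{m}(M)\leq\widetilde{\mathfrak{m}}(B^+_{\tilde{x}}(R+2D))\leq C'\,\widetilde{\mathfrak{m}}(B^+_{\tilde{x}}(R))$ \emph{before} separating the two factors, and the would-be lower bound on $\mathfrak{m}(M)$ never arises.
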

\begin{proof} \textbf{Step 1.} We first estimate the diameter of $M$.
If  $R>{\pi}/{\sqrt{K}}$, then the theorem  follows from Lemma \ref{Rlesslemm} directly. Hence, we only need to consider the case when $R\leq {\pi}/{\sqrt{K}}$.

Fix  $\mathfrak{R}>\pi/\sqrt{K}$ arbitrarily and choose any point $x\in M$. Let $\{B^+_{x_i}(R/(1+\delta))\}_{i\in I}$ be a maximal disjoint family in $B^+_x(\mathfrak{R})$. Thus, $\{B^+_{x_i}(R)\}_{i\in I}$ is a covering of  $B^+_x(\mathfrak{R})$ and hence,
\begin{align*}
&\frac{1}{\mathfrak{m}(B^+_{x}(\mathfrak{R}))}\int_{B^+_{x}(\mathfrak{R})}((n-1)K-\underline{\mathbf{Ric}})^q_+d\mathfrak{m}\\
\leq& \frac{1}{\mathfrak{m}(B^+_{x}(\mathfrak{R}))}\sum_{i\in I} \int_{B^+_{x_i}(R)}((n-1)K-\underline{\mathbf{Ric}})^q_+d\mathfrak{m}\\
=&\sum_{i\in I}\frac{\mathfrak{m}(B^+_{x_i}(R))}{\mathfrak{m}(B^+_{x}(\mathfrak{R}))}\frac{1}{\mathfrak{m}(B^+_{x_i}(R))}\int_{B^+_{x_i}(R)}((n-1)K-\underline{\mathbf{Ric}})^q_+d\mathfrak{m}\\
\leq &\sum_{i\in I}\frac{\mathfrak{m}(B^+_{x_i}(R))}{\mathfrak{m}(B^+_{x}(\mathfrak{R}))}\overline{\mathcal {K}}_{d\mathfrak{m}}(q,K,R)\\
\leq &\sum_{i\in I}\frac{\mathfrak{m}(B^+_{x_i}(R))}{\mathfrak{m}(B^+_{x_i}(R/(1+\delta)))}\frac{\mathfrak{m}(B^+_{x_i}(R/(1+\delta)))}{\mathfrak{m}(B^+_x(\mathfrak{R}))}\overline{\mathcal {K}}_{d\mathfrak{m}}(q,K,R)\\
\leq &C(n,k,\delta,R)\sum_{i\in I}\frac{\mathfrak{m}(B^+_{x_i}(R/(1+\delta)))}{\mathfrak{m}(B^+_x(\mathfrak{R}))}\overline{\mathcal {K}}_{d\mathfrak{m}}(q,K,R)\\
\leq &C(n,k,\delta,R)\cdot\overline{\mathcal {K}}_{d\mathfrak{m}}(q,K,R),
\end{align*}
where
\[
C(n,k,\delta,R):=\delta^{4n}\frac{v(n,-k^2,R)}{v(n,-k^2,R/(1+\delta))}.
\]
That is,
\[
\overline{\mathcal {K}}_{d\mathfrak{m}}(q,K,\mathfrak{R})\leq C(n,k,\delta,R)\cdot\overline{\mathcal {K}}_{d\mathfrak{m}}(q,K,R).\tag{3.10}\label{4.7newnew}
\]
Choose $\varepsilon=\varepsilon(n,q,k,K,\delta,\mathfrak{R},\rho)$ as defined in Lemma \ref{Rlesslemm} and set
\[
\overline{\mathcal {K}}_{d\mathfrak{m}}(q,K,R)<\frac{\varepsilon}{C(n,k,\delta,R)}.
\]
Then the estimate of the diameter follows from (\ref{4.7newnew}) and Lemma \ref{Rlesslemm}.

\noindent \textbf{Step 2.} Now we show that $\widetilde{M}$ is compact. In the sequel, we  use $\widetilde{*}$ to denote the geometric quantity $*$ in $(\widetilde{M},f^*F)$. Suppose that $R>\pi/\sqrt{K}$. It follows from Lemma \ref{Rlesslemm} that $\diam(M)<(1+\delta)R$.  We show that $\widetilde{\overline{{\mathcal {K}}}}_{d\mathfrak{m}}(q,K,R)$ is controlled by $\overline{{\mathcal {K}}}_{d\mathfrak{m}}(q,K,R)$, where  $\widetilde{\overline{{\mathcal {K}}}}_{d\mathfrak{m}}(q,K,R)$ is the integral curvature
of $(\widetilde{M},f^*F)$.

In fact, given any point $\tilde{x}\in \widetilde{M}$, let $N$ denote the minimal number of the fundamental domains $\gamma(\Omega)$ covering $B^+_{\tilde{x}}(R)$, i.e.,
\[
B^+_{\tilde{x}}(R)\subset \cup_{i=1}^N\gamma_i(\Omega)\subset B^+_{\tilde{x}}((2+\delta)R).\tag{3.11}\label{4.3C}
\]
Hence,
\[
\frac{\mathfrak{m}(B^+_{\tilde{x}}(2+\delta)R)}{\mathfrak{m}(B^+_{\tilde{x}}(R))}\leq \delta^{4n}\frac{v(n,-k^2,(2+\delta)R)}{v(n,-k^2,R)}=:C'(n,k,\delta,R).\tag{3.12}\label{4.4C}
\]
It follows from (\ref{4.3C}) and (\ref{4.4C}) that
\begin{align*}
&\frac{1}{\mathfrak{m}(B^+_{\tilde{x}}(R))}\int_{B^+_{\tilde{x}}(R)}\left((n-1)K-\widetilde{\underline{\mathbf{Ric}}}\right)_+^q d{\mathfrak{m}}\\
\leq& \frac{N}{\mathfrak{m}(B^+_{\tilde{x}}(R))}\int_{M}\left((n-1)K-{\underline{\mathbf{Ric}}}\right)_+^q d{\mathfrak{m}}\\
\leq &C'(n,k,\delta,R)\frac{N}{\mathfrak{m}(B^+_{\tilde{x}}((2+\delta)R))}\int_{M}\left((n-1)K-\underline{\mathbf{Ric}}\right)_+^q d{\mathfrak{m}}\\
\leq &\frac{C'(n,k,\delta,R)}{\mathfrak{m}(M)}\int_{M}\left((n-1)K-\underline{\mathbf{Ric}}\right)_+^q d{\mathfrak{m}}=C'(n,k,\delta,R)\cdot \overline{\mathcal {K}}_{d\mathfrak{m}}(q,K,R),
\end{align*}
which implies that
\[
\widetilde{\overline{{\mathcal {K}}}}_{d\mathfrak{m}}(q,K,R)\leq C'(n,k,\delta,R)\cdot \overline{\mathcal {K}}_{d\mathfrak{m}}(q,K,R).\tag{3.13}\label{4.5C}
\]
Let $\varepsilon=\varepsilon(n,q,k,K,\delta,R,\rho)$ defined in Lemma \ref{Rlesslemm} and let
\[
\overline{\mathcal {K}}_{d\mathfrak{m}}(q,K,R)<\frac{\varepsilon}{ C'(n,k,\delta,R)}.
\]
Now (\ref{4.5C}) together with $\widetilde{\mathbf{Ric}}\geq -(n-1)k^2$, $\Lambda_{\widetilde{F}}\leq \delta^2$ and Lemma \ref{Rlesslemm} yields that $\widetilde{M}$ is compact.

For $R\leq \pi/\sqrt{K}$, one can use the same argument as in Step 1 to show that $\widetilde{M}$ is compact. Therefore, $\pi_1(M)$ is finite.
\end{proof}

For a Berwald manifold, we obtain the following result by means of Petersen-Sprouse \cite{PC}.
\begin{theorem}\label{BreMy}
Given any $n>1$, $q>n/2$,  $K>0$, $R>0$ and $\delta\geq 1$, for each $\rho>0$, there exists $\varepsilon=\varepsilon(n,q, K,\delta,R,\rho)$ such that if
a  (forward) complete Berwald $n$-manifold $(M,F)$ satisfies
\[
\Lambda_F\leq \delta^2,\ \overline{\mathcal {K}}_{d\mathfrak{m}}(q,K,R)<\varepsilon,
\]
then
\[
\diam(M)\leq \delta^2\left( \frac{\pi}{\sqrt{K}}+\rho  \right)
\]
\end{theorem}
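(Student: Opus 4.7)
The plan is to reduce the Berwald case to a Riemannian diameter estimate by passing to the average metric $\hat{g}$ and then invoking the Petersen--Sprouse theorem \cite{PC}. The hypothesis $q>n/2$ is precisely the one used in \cite{PC}, and it is what allows one to drop any a priori pointwise lower Ricci bound.

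For Berwald $F$, the Chern connection coefficients $\Gamma^i_{jk}$ depend only on $x$ and, by a theorem of Szab\'o, coincide with the Levi--Civita connection of $\hat{g}$. Hence the Riemann curvature $(1,3)$-tensor of $\hat{g}$ equals the tensor $R^i_{jkl}$ appearing in the flag curvature, and $F$-geodesics are reparametrizations of $\hat{g}$-geodesics. From (2.1) and $\Lambda_F\le\delta^2$ we obtain $\delta^{-2}F^2(X)\le\hat{g}(X,X)\le\delta^2F^2(X)$, so integrating along curves yields $d_F(p,q)\le\delta\,d_{\hat{g}}(p,q)$ and $d_{\hat{g}}(p,q)\le\delta\,d_F(p,q)$, whence the inclusions $B^+_x(R/\delta)\subset B^{\hat{g}}_x(R)\subset B^+_x(\delta R)$ and $\diam_F(M)\le\delta\,\diam_{\hat{g}}(M)$. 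Proposition \ref{uniformtorsion} combined with an analogous estimate for $\sqrt{\det\hat{g}}$ versus $\sqrt{\det g_y}$ shows that the Riemannian volume $d\vol_{\hat{g}}$ and $d\mathfrak{m}$ are comparable up to a $\delta^{Cn}$ factor.

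For the curvature comparison, write $\mathcal{R}_V(W):=R(W,V)V$. Then $\mathbf{Ric}_F(y)=\mathrm{tr}_{g_y}(\mathcal{R}_y)$ for $y\in S_xM$, while $\mathrm{Ric}_{\hat{g}}(V)/\hat{g}(V,V)=\mathrm{tr}_{\hat{g}}(\mathcal{R}_V)/\hat{g}(V,V)$. Since $g_y$ and $\hat{g}$ are $\delta^2$-comparable as inner products on each $T_xM$, these two traces differ only by powers of $\delta$, and a routine manipulation yields a pointwise inequality of the form
\[
\bigl((n-1)K/\delta^2-\mathrm{Ric}_{\hat{g}}(V)/\hat{g}(V,V)\bigr)_+^q\le C(n,q,\delta)\,\bigl((n-1)K-\underline{\mathbf{Ric}}_F\bigr)_+^q.
\]
Combined with the ball and volume comparisons of the previous paragraph, this yields $\overline{\mathcal{K}}_{\hat{g}}(q,K/\delta^2,R/\delta)\le C'(n,q,\delta,R)\,\overline{\mathcal{K}}_{d\mathfrak{m}}(q,K,R)$, where $\overline{\mathcal{K}}_{\hat{g}}$ is the Riemannian integral Ricci curvature norm of $(M,\hat{g})$.

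Finally I apply Petersen--Sprouse to $(M,\hat{g})$ with parameters $K_0:=K/\delta^2$, $R_0:=R/\delta$ and $\rho_0:=\delta\rho$, using $q>n/2$. This produces an $\varepsilon_0=\varepsilon_0(n,q,K_0,R_0,\rho_0)$ such that smallness of $\overline{\mathcal{K}}_{\hat{g}}(q,K_0,R_0)$ forces $\diam_{\hat{g}}(M)\le\pi/\sqrt{K_0}+\rho_0=\delta\pi/\sqrt{K}+\delta\rho$. Choosing $\varepsilon:=\varepsilon_0/C'$ and scaling by $\delta$ gives $\diam_F(M)\le\delta\,\diam_{\hat{g}}(M)\le\delta^2(\pi/\sqrt{K}+\rho)$. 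The main obstacle is the curvature comparison in the previous paragraph: since $\underline{\mathbf{Ric}}_F(x)$ and $\mathrm{Ric}_{\hat{g}}(V)/\hat{g}(V,V)$ are traces of the same endomorphism against different inner products, and neither sign is known a priori, one must check carefully that the positive-part truncation is compatible with the trace comparison --- this is what forces the Riemannian curvature parameter to become $K/\delta^2$ instead of $K$, and is the ultimate source of the $\delta^2$ in the diameter bound.
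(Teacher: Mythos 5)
Your overall strategy is the same as the paper's: pass to the average metric $\hat{g}$, use that the Chern connection of a Berwald metric coincides with the Levi--Civita connection of $\hat{g}$, translate the Finsler integral Ricci bound into a Riemannian one, invoke Petersen--Sprouse, and convert the resulting $\hat{g}$-diameter bound back to an $F$-diameter bound via the uniformity constant. The metric comparison $\delta^{-1}d_{\hat g}\le d_F\le\delta d_{\hat g}$, the density comparison $\delta^{-3n}d\vol\le d\mathfrak{m}\le\delta^{3n}d\vol$, and the pointwise inequality $\delta^{-2}\bigl((n-1)K\delta^{-2}-\mathcal{J}\bigr)_+\le\bigl((n-1)K-\underline{\mathbf{Ric}}_F\bigr)_+$ all match the paper.

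The gap is in the step where you assert $\overline{\mathcal{K}}_{\hat{g}}(q,K/\delta^2,R/\delta)\le C'\,\overline{\mathcal{K}}_{d\mathfrak{m}}(q,K,R)$ "combined with the ball and volume comparisons." The density comparison and ball inclusions give, for a fixed center $p$,
\[
\frac{1}{\vol(\hat{B}_p(R/\delta))}\int_{\hat{B}_p(R/\delta)} f\,d\vol
\ \le\ \delta^{6n}\,\frac{\mathfrak{m}(B^+_p(R))}{\mathfrak{m}(B^+_p(R/\delta^2))}\cdot\frac{1}{\mathfrak{m}(B^+_p(R))}\int_{B^+_p(R)} f\,d\mathfrak{m},
\]
so you still have to bound the ratio $\mathfrak{m}(B^+_p(R))/\mathfrak{m}(B^+_p(R/\delta^2))$ uniformly in $p$. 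This is a volume-doubling estimate, and in the Berwald part of the theorem no pointwise lower Ricci bound is assumed, so Bishop--Gromov is not available. The paper handles this via the integral-curvature relative volume comparison (Theorem \ref{localver} in the Appendix), applied with $k=0$, to obtain $\mathfrak{m}(B^+_x(R))/\mathfrak{m}(B^+_x(R/(1+\delta)))\le 2\delta^{4n}(1+\delta)^n$ whenever $\overline{\mathcal{K}}_{d\mathfrak{m}}(q,K,R)$ is small; together with a Vitali-type covering of $\hat{B}_p(\delta R)$ by balls $B^+_{x_i}(R)$ this yields the desired comparison. Note that this volume-comparison step is in fact where the hypothesis $q>n/2$ enters the Finsler side of the argument (not only on the Riemannian side through Petersen--Sprouse). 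Your proof needs to invoke something like Theorem \ref{localver} explicitly; without it the normalization mismatch between $\vol(\hat{B}_p(\cdot))$ and $\mathfrak{m}(B^+_p(\cdot))$ is not controlled and the reduction to the Riemannian theorem does not go through.
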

\begin{proof}
Let $\hat{g}$ be the average Riemannian metric of $g$ and let $\hat{B}_x(r)$ denote the geodesic ball centered at $x$ with radius $r$ in $(M,\hat{g})$. For each $p\in \hat{B}_x(r)-\{x\}$, there exists a minimal normal geodesic $\gamma(t)$ (with respect to $(M,F)$)
from $x$ to $p$. In a local coordinate system $(x^i)$, set
\[
d\mathfrak{m}=\sigma\cdot dx^1\wedge \cdots\wedge dx^n=h\cdot d\vol,
\]
where $d\vol$ is the Riemannian measure of $\hat{g}$ and
\[
h(\gamma(t))=\frac{\sigma(\gamma(t))}{\sqrt{\det g(\gamma(t),\dot{\gamma}(t))}}\cdot\frac{\sqrt{\det g(\gamma(t),\dot{\gamma}(t))}}{\sqrt{\det \hat{g}(\gamma(t))}}.
\]

In the following, we estimate $h$. Since the Levi-Civita connection of $\hat{g}$ is exactly the Chern connection of $g$, one can choose
a $g_{\dot{\gamma}}$-orthnormal parallel frame field $\{E_i\}$ such that each $E_i|_p$ is the eigenvector of  $\hat{g}|_p$. Note that $h(\gamma(t))$ is independent of the choice of coordinates. Denote by $\det{g}$ and $\det\hat{g}$ be the determinants of $g$ and $\hat{g}$ w.r.t. $\{E_i\}$, respectively. It is easy to see that
\[
\det {g}(\gamma(t),\dot{\gamma}(t))=1,\ \delta^{-2n}\leq \det\hat{g}(\gamma(t))\leq \delta^{2n}.
\]

Then
\[
h(\gamma(t))=e^{-\tau(\dot{\gamma}(t))}\frac{\sqrt{\det g(\gamma(t),\dot{\gamma}(t))}}{\sqrt{\det \hat{g}(\gamma(t))}},
\]
which together with Proposition \ref{uniformtorsion} implies
\[
\delta^{-3n}\leq h(\gamma(t))\leq \delta^{3n}\Rightarrow  \delta^{-3n}d\vol\leq d\mathfrak{m}\leq \delta^{3n}d\vol,\tag{3.14}\label{4.11newnew}
\]
and hence,
\begin{align*}
\frac{1}{\vol(B^+_{x}(R))}\int_{B^+_{x}(R)} f d{\vol}&\leq \delta^{6n}\frac{1}{\mathfrak{m}(B^+_{x}(R))}\int_{B^+_{x}(R)}f d\mathfrak{m}, \tag{3.15}\label{4.7C}
\end{align*}

Note that $\overline{\mathcal {K}}_{d\mathfrak{m}}(q,0,R)\leq \overline{\mathcal {K}}_{d\mathfrak{m}}(q,K,R)$.
Thus, it follows from Theorem \ref{localver} that there exists $\varepsilon_1=\varepsilon_1(n,q,0,\delta,R,\delta^{-4n}/2)$ such that if $\overline{\mathcal {K}}_{d\mathfrak{m}}(q,K,R)<\varepsilon_1$, then for any $x\in M$,
\[
\frac{\mathfrak{m}(B^+_{x}(R))}{\mathfrak{m}(B^+_{x}(R/(1+\delta)))}\leq 2\delta^{4n}(1+\delta)^n.\tag{3.16}\label{4.6C}
\]

Given $p\in M$, let $\{B^+_{x_i}(R/(1+\delta))\}_{i\in I}$ denote the maximal family of disjoint forward balls in $\hat{B}_p(\delta R)$. Thus, $\{B^+_{x_i}(R)\}_{i\in I}$ is a covering of $\hat{B}_p(\delta R)$. Let $\mathcal {J}(p)$ denote the minimal eigenvalue of the Ricci tensor of $\hat{g}$ at $p\in M$. Since $(M,F)$ is Berwaldian, one gets
$\delta^{-2}\left((n-1)K\delta^{-2}-\mathcal {J}\right)_+\leq \left((n-1)K-\underline{\mathbf{Ric}}\right)_+$.
Thus,
it follows from (\ref{4.11newnew})-(\ref{4.6C}) that
\begin{align*}
&\frac{\delta^{-2q}}{\vol(\hat{B}_p(\delta R))}{\int_{\hat{B}_p(\delta R)}}\left((n-1)\frac{K}{\delta^{2}}-\mathcal {J}\right)_+^q d\vol\\
\leq &\frac{\delta^{6n}}{\mathfrak{m}(\hat{B}_p(\delta R))}\sum_{i\in I}\int_{B^+_{x_i}(R)}\left((n-1){K}-\underline{\mathbf{Ric}}\right)_+^q d\mathfrak{m}\\
\leq &\delta^{6n}\sum_{i\in I}\frac{\mathfrak{m}(B^+_{x_i}(R))}{\mathfrak{m}(B^+_{x_i}(R/(1+\delta)))}\frac{\mathfrak{m}(B^+_{x_i}(R/(1+\delta)))}{\mathfrak{m}(\hat{B}_p(\delta R))}\overline{\mathcal {K}}_{d\mathfrak{m}}(q,K,R)\\
\leq &\delta^{6n}\sup_{i\in I}\frac{\mathfrak{m}(B^+_{x_i}(R))}{\mathfrak{m}(B^+_{x_i}(R/(1+\delta)))}\cdot \overline{\mathcal {K}}_{d\mathfrak{m}}(q,K,R)\leq 2\delta^{10n}(1+\delta)^n\cdot \overline{\mathcal {K}}_{d\mathfrak{m}}(q,K,R).
\end{align*}
That is,
\[
\widehat{\overline{\mathcal {K}}}_{d\vol}(q,K/\delta^2,\delta R)\leq  2\delta^{10n+2q}(1+\delta)^n\cdot \overline{\mathcal {K}}_{d\mathfrak{m}}(q,K,R),\tag{3.17}\label{4.14newnew}
\]
where $\widehat{\overline{\mathcal {K}}}_{d\vol}(\cdot,\cdot,\cdot)$ is the integral curvature of $(M,\hat{g},d\vol)$.

According to \cite[Theorem 1.1]{PC}, there exists $\varepsilon_2=\varepsilon_2(n,p,K,\delta,R,\rho)$ such that if $\widehat{\overline{\mathcal {K}}}_{d\vol}(q,K/\delta^2,\delta R)<\varepsilon_2$,
then
\[
\diam_{\hat{g}}(M)\leq \delta\left( \frac{\pi}{\sqrt{K}}+\rho  \right).
\]
Due to (\ref{4.14newnew}), we conclude the proof by choosing
\[
\varepsilon:=\min\left\{\varepsilon_1,\frac{\varepsilon_2}{ 2\delta^{10n+2q}(1+\delta)^n} \right\}.
\]
\end{proof}

\noindent{\it The proof of Theorem \ref{mysfirst}.}
Theorem \ref{mysfirst} follows from Theorem \ref{firstheorem} and Theorem \ref{BreMy} directly.

\section{Appendix}

In this section, we will establish a relative volume comparison by the integral curvature.
Now we recall the polar coordinate system of a Finsler manifold first.

Fix a point $p\in M$ arbitrarily.
Set
$D_p:=\{ty\in T_pM: y\in S_pM, 0\leq t<i_y\}$, and $\mathcal
{D}_p:=\exp_p(D_p)$. Thus, $M=\mathcal {D}_p\sqcup \text{Cut}_p$
(cf. \cite[Prop. 8.5.2]{BCS}). Now, let $r(x):=d(p,x)$. Then
 the {\it polar coordinates} at $p$ actually describe a
diffeomorphism of $D_p-\{0\}$ onto $\mathcal
{D}_p\backslash\{p\}$, given by
\[
(r,y)\mapsto \exp_p ry.
\]
Now write
\[
d\mathfrak{m}=:\hat{\sigma}_p(r,y)dr\wedge d\nu_p(y),
\]
where $d\nu_p(y)$ is the Riemannian volume measure induced by $F$ on $S_pM$. According to \cite{Sh2,W,ZY2}, one has
\[
\left.\nabla r\right|_{(r,y)}=\left.\frac{\partial}{\partial r}\right|_{(r,y)}=\dot{\gamma}_y(r),\ \Delta r=\frac{\partial}{\partial r}\log \hat{\sigma}_p(r,y),\ \text{ for }0<r<i_y,\tag{A.1}\label{1.2}
\]
and
\[
\lim_{r\rightarrow 0^+}\frac{\hat{\sigma}_p(r,y)}{e^{-\tau(\dot{\gamma}_y(r))}\mathfrak{s}^{n-1}_k(r)}=1,\tag{A.2}\label{new2.3}
\]
where $\gamma_y(t):=\exp_p(ty)$  and $\mathfrak{s}_k(t)$ is the unique
solution to $f'' + kf = 0$ with $f(0) = 0$ and $f'(0) = 1$.

It is easy to check that (also see \cite[(4.5)]{Wu2})
\[
\frac{\partial}{\partial r}H\leq -\mathbf{Ric}(\nabla r)-\frac{H^2}{n-1},\ \text{ for any }(r,y)\in D_p,
\]
where
\[
H(r,y):=\frac{\partial}{\partial r}\log \left[ \hat{\sigma}_p(r,y) e^{\tau(\dot{\gamma}_y(r))}  \right]=\frac{\partial}{\partial r}\log \sqrt{\det g_{\nabla r}}.
\]
Also set
\[
H_k(r):=\frac{\partial}{\partial r}\log \mathfrak{s}^{n-1}_k(r).
\]
Thus, (\ref{new2.3}) reads
\[
\lim_{r\rightarrow 0^+}\left[ H(r,y)-H_k(r)\right]=0.\tag{A.3}\label{nA1}
\]

Inspired by \cite{PW}, we have following theorem.
\begin{theorem}\label{localver}
Given $\delta\geq 1$, $q>n/2$ and $k\leq 0$, for  $\alpha\in(0,\delta^{-4n})$,  there exists an $\varepsilon=\varepsilon(n,q,k,\delta,R,\alpha)>0$ such that
any forward  complete  Finsler $n$-manifold $(M,F,d\mathfrak{m})$  with
\[
\Lambda_F\leq \delta^2,\ \overline{\mathcal {K}}_{d\mathfrak{m}}(q,k,R)<\varepsilon,
\]
must satisfy
\[
\alpha \cdot\frac{v(n,k,r_1)}{v(n,k,r_2)}\leq \frac{\mathfrak{m}(B^+_{x}(r_1))}{\mathfrak{m}(B^+_{x}(r_2))},\tag{A.4}\label{3.4newnew}
\]
for
 all $x\in M$ and $0<r_1\leq r_2\leq R$.

Moreover,
 if additionally suppose that $F$ is Berwaldian, then the above result still holds without the assumption $\Lambda_F\leq \delta^2$, in which case $\alpha\in(0,1)$.
\end{theorem}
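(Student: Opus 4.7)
The plan is to follow the strategy of Petersen--Wei \cite{PW,PW2}, with the Finsler distortion $\tau$ tracked through Proposition \ref{uniformtorsion}. In polar coordinates at $x$, set $A(r,y) := \hat\sigma_x(r,y) e^{\tau(\dot\gamma_y(r))}$, $H := \partial_r\log A$, $\psi := (H-H_k)_+$ and $\rho := ((n-1)k - \underline{\mathbf{Ric}})_+$. Combining the radial Riccati-type inequality recorded in the appendix with the model identity $\partial_r H_k = -(n-1)k - H_k^2/(n-1)$ gives, wherever $\psi > 0$,
\[
\partial_r\psi + \frac{2H_k}{n-1}\psi + \frac{\psi^2}{n-1} \leq \rho,
\]
and $\psi(0^+,y) = 0$ by (\ref{nA1}); note that $H_k \geq 0$ since $k \leq 0$.

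The second step is the weighted $L^{2q}$ estimate along each radial geodesic. Multiplying the displayed inequality by $(2q-1)\psi^{2q-2}\mathfrak{s}_k^{n-1}(r)$, integrating in $r \in [0,R]$, and using $\partial_r\mathfrak{s}_k^{n-1} = H_k\mathfrak{s}_k^{n-1}$ to form the total derivative $\partial_r(\psi^{2q-1}\mathfrak{s}_k^{n-1})$, the residual coefficient on $H_k\psi^{2q-1}\mathfrak{s}_k^{n-1}$ is $(4q-n-1)/(n-1)$, which is nonnegative exactly when $q > n/2$; dropping this term and applying H\"older with conjugate exponents $q$ and $q/(q-1)$ to the right-hand side yields
\[
\int_0^R \psi^{2q}(r,y)\mathfrak{s}_k^{n-1}(r)\,dr \leq (n-1)^q\int_0^R \rho^q(r,y)\mathfrak{s}_k^{n-1}(r)\,dr.
\]
Integrating in $y \in S_xM$ and converting between the model weight $\mathfrak{s}_k^{n-1}\,dr\,d\nu_x$ and $d\mathfrak{m}=\hat\sigma_x\,dr\,d\nu_x$ using $\delta^{-2n}\leq e^\tau \leq \delta^{2n}$ from Proposition \ref{uniformtorsion} bounds the right-hand side by $C(n,q,k,\delta,R)\cdot \mathfrak{m}(B^+_x(R))\cdot\overline{\mathcal{K}}_{d\mathfrak{m}}(q,k,R)$, which is arbitrarily small once $\varepsilon$ is.

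Finally, set $V(r) := \int_{S_xM} A(r,y)\,d\nu_x(y)$; a direct computation gives $V'(r) - H_k V(r) \leq \int_{S_xM} A\psi\,d\nu_x$, and integrating this inequality and applying H\"older against the $L^{2q}$-bound above yields a quantitative Bishop--Gromov-type comparison
\[
\frac{\int_0^{r_1} V(r)\,dr}{\int_0^{r_2} V(r)\,dr} \geq \frac{1}{1+\eta(\varepsilon)}\cdot\frac{v(n,k,r_1)}{v(n,k,r_2)},
\]
with $\eta(\varepsilon) \to 0$ as $\varepsilon \to 0$. Since $\hat\sigma_x = A e^{-\tau}$ and $e^{-\tau}\in [\delta^{-2n},\delta^{2n}]$, replacing $A$ by $\hat\sigma_x$ on both sides contributes a combined factor of at most $\delta^{4n}$, producing (\ref{3.4newnew}) with $\alpha = \delta^{-4n}/(1+\eta(\varepsilon))$; any prescribed $\alpha\in(0,\delta^{-4n})$ is then reached by taking $\varepsilon$ small enough. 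In the Berwald case the $S$-curvature vanishes, so $\tau(\dot\gamma_y(r))$ is constant in $r$ and $\hat\sigma_x(r,y) = e^{-\tau(y)} A(r,y)$: pulling the $y$-only factor $e^{-\tau(y)}$ out of the inner $r$-integral cancels the distortion between numerator and denominator, so no power of $\delta$ is lost and any $\alpha\in(0,1)$ is admissible. The main technical obstacle is the quantitative Bishop--Gromov rearrangement: the $L^{2q}$ estimate is naturally tied to the model weight $\mathfrak{s}_k^{n-1}\,dr\,d\nu_x$, and one must carefully convert it to a ratio estimate for the Finsler volume $\mathfrak{m}$, each conversion contributing a factor of $\delta^{2n}$ and thus precisely pinning down the admissible range $\alpha<\delta^{-4n}$.
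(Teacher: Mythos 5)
Your overall architecture mirrors the Petersen--Wei/paper strategy (Riccati inequality for $\psi := (H-H_k)_+$, a radial $L^{2q}$ estimate, a quantitative Bishop--Gromov ODE), and your Berwald remark is correct in spirit. But your Step~2 does the integration by parts against the \emph{model} weight $\mathfrak{s}_k^{n-1}(r)$, yielding
\[
\int_0^R \psi^{2q}\,\mathfrak{s}_k^{n-1}\,dr \ \leq\ (n-1)^q\int_0^R \rho^q\,\mathfrak{s}_k^{n-1}\,dr ,
\]
and this is where the argument breaks. The hypothesis $\overline{\mathcal {K}}_{d\mathfrak{m}}(q,k,R)<\varepsilon$ controls $\int \rho^q\,\hat\sigma_x\,dr\,d\nu_x$ (the weight is the actual Finsler density $\hat\sigma_x$, possibly multiplied by $e^{\pm\tau}\in[\delta^{-2n},\delta^{2n}]$), whereas your right-hand side carries the weight $\mathfrak{s}_k^{n-1}$. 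Proposition~\ref{uniformtorsion} compares $\hat\sigma_x$ with $e^\tau\hat\sigma_x$, \emph{not} $\mathfrak{s}_k^{n-1}$ with $\hat\sigma_x$; under an integral Ricci bound there is no two-sided pointwise comparison between $\mathfrak{s}_k^{n-1}$ and $e^\tau\hat\sigma_x$ (Bishop would give one inequality, and only under a pointwise lower Ricci bound). So the claim ``converting between $\mathfrak{s}_k^{n-1}\,dr\,d\nu_x$ and $d\mathfrak{m}$ using $\delta^{-2n}\leq e^\tau\leq\delta^{2n}$'' is unfounded. The same mismatch recurs in Step~3: after $V'(r)-H_kV(r)\leq \int_{S_xM}A\psi\,d\nu_x$ (which is fine), the H\"older step needs $\psi\in L^{2q}$ with respect to the weight $A\,dr\,d\nu_x$, not $\mathfrak{s}_k^{n-1}\,dr\,d\nu_x$, so your $L^{2q}$ bound cannot even be plugged in internally.

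The paper avoids this by multiplying the Riccati inequality by $\Psi^{2q-2}\,e^\tau\hat\sigma_p$ rather than $\Psi^{2q-2}\,\mathfrak{s}_k^{n-1}$. Then the integration by parts produces $\partial_r(e^\tau\hat\sigma_p) = H\,e^\tau\hat\sigma_p$, one bounds $H\leq\Psi+H_k$ (the $\Psi^{2q}$ piece is absorbed because $1/(n-1)-1/(2q-1)>0$, and the $H_k\Psi^{2q-1}$ piece is dropped since $H_k\geq 0$ for $k\leq 0$ and its coefficient $2/(n-1)-1/(2q-1)>0$), giving the $L^{2q}$ estimate with the \emph{same} weight $e^\tau\hat\sigma_p$ that appears later, hence directly comparable to $d\mathfrak{m}$ via Proposition~\ref{uniformtorsion}. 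A minor side remark: your coefficient $(4q-n-1)/(n-1)$ is nonnegative as soon as $q\geq (n+1)/4$, not ``exactly when $q>n/2$''; the real place $q>n/2$ is needed is in the absorption of the $\Psi^{2q}$ term (i.e.\ $2q-1>n-1$) and for convergence of $\int_0^R v(n,k,s)^{-1/2q}\,ds$. Redo Step~2 with the weight $e^\tau\hat\sigma_x$ and the rest of your outline goes through.
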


\begin{proof}
\noindent\textbf{Step 1.}
Define two functions on $[0,+\infty)\times S_pM$ as follows:
\begin{align*}
\Psi(r,y)&:=\left\{
\begin{array}{lll}
& (H(r,y)-H_k(r) )_+, & \ \ \ \text{if } 0\leq r<i_y, \ y\in S_pM \\
\\
&0, & \ \ \ \text{if } r\geq i_y,
\end{array}
\right.\\
\\
e^{\tau(r,y)}&:=\left\{
\begin{array}{lll}
& e^{\tau(\dot{\gamma}_y(r))}, & \ \ \ \text{if } 0\leq r<i_y, \ y\in S_pM , \\
\\
&0, & \ \ \ \text{if } r\geq i_y.
\end{array}
\right.
\end{align*}
Thus, given $y\in S_pM$, for almost every $r>0$, we have
\[
\frac{d}{dr}\left[\frac{e^{\tau(r,y)}\hat{\sigma}_p(r,y)}{\mathfrak{s}^{n-1}_k(r)} \right]\leq \Psi(r,y)\frac{e^{\tau(r,y)}\hat{\sigma}_p(r,y)}{\mathfrak{s}^{n-1}_k(r)},\tag{A.5}\label{nA0}
\]
which furnishes that for all $0\leq t<r<+\infty$,
\begin{align*}
\mathfrak{s}_k^{n-1}(t)\cdot e^{\tau(r,y)}\hat{\sigma}_p(r,y)-\mathfrak{s}_k^{n-1}(r)\cdot e^{\tau(t,y)}\hat{\sigma}_p(t,y)
\leq \mathfrak{s}_k^{n-1}(r)\int^r_0\Psi(s,y){e^{\tau(s,y)}\hat{\sigma}_p(s,y)}ds.
\end{align*}
Now the above inequality together with the H\"older inequality yields
\begin{align*}
&\frac{d}{dr}\left[\frac{\int_{B^+_p(r)}e^\tau d\mathfrak{m}}{v(n,k,r)}\right]\\
=&\frac{\vol(\mathbb{S}^{n-1})\int_{S_pM}\int^r_0\left[\mathfrak{s}_k^{n-1}(t)e^{\tau(r,y)}\hat{\sigma}_p(r,y)-\mathfrak{s}_k^{n-1}(r) e^{\tau(t,y)}\hat{\sigma}_p(t,y) \right]dt\, d\nu_p(y)}{v(n,k,r)^2}\\
\leq &\frac{\vol(\mathbb{S}^{n-1})\cdot r\cdot\mathfrak{s}_k^{n-1}(r)\int_{S_pM}\int^r_0\Psi(s,y){e^{\tau(s,y)}\hat{\sigma}_p(s,y)}ds\, d\nu_p(y)}{v(n,k,r)^2}\\
\leq &\frac{{\vol(\mathbb{S}^{n-1})}\cdot r\cdot\mathfrak{s}_k^{n-1}(r)\cdot\left(\int_{B^+_p(r)} \Psi^{2q} e^{\tau} d\mathfrak{m}\right)^{1/2q}\left(\int_{B^+_p(r)}e^\tau d\mathfrak{m}\right)^{1-1/2q}}{v(n,k,r)^2}.
\end{align*}
That is,
\begin{align*}
&\frac{d}{dr}\left( \frac{\int_{B^+_p(r)}e^\tau d\mathfrak{m}}{v(n,k,r)}\right)\\
\leq& C_1(n,k,r)\cdot \left( \frac{\int_{B^+_p(r)}e^\tau d\mathfrak{m}}{v(n,k,r)} \right)^{1-\frac{1}{2q}}\cdot\left(\int_{B^+_p(r)} \Psi^{2q} e^{\tau} d\mathfrak{m} \right)^{\frac{1}{2q}}\cdot v(n,k,r)^{-\frac{1}{2q}},\tag{A.6}\label{3.5newnwew}
\end{align*}
where
\[
C_1(n,k,r):=\max_{t\in [0,r]}\frac{\vol(\mathbb{S}^{n-1})\cdot t\cdot \mathfrak{s}^{n-1}_k(t) }{v(n,k,t)}<\infty.
\]

\noindent \textbf{Step 2.} Now we claim that there exists $C_2(n,q)>0$ such that if $q>n/2$, then
\[
\int^r_0 \Psi^{2q} e^{\tau(t,y)}\hat{\sigma}_p(t,y)dt\leq C_2(n,q)\int^r_0 \rho^q e^{\tau(t,y)}\hat{\sigma}_p(t,y) dt,\ \forall \,r>0,\tag{A.7}\label{3.6newnew}
\]
where
\begin{align*}
\rho(r,y)&:=\left\{
\begin{array}{lll}
& ( (n-1)k-\mathbf{Ric}(\nabla r))_+, & \ \ \ \text{if } 0<r<i_y, \ y\in S_pM, \\
\\
&0, & \ \ \ \text{if }r\geq i_y \text{ or } r=0.
\end{array}
\right.
\end{align*}

In fact, the definition of $\Psi$ yields that
\[
\frac{\partial}{\partial r}\Psi+\frac{\Psi^2}{n-1}+2\frac{\Psi\cdot H_k}{n-1}\leq \rho, \text{ for almost every }r>0,
\]
which implies
\begin{align*}
&\int^r_0\frac{\partial}{\partial t}\Psi\cdot\Psi^{2q-2}e^{\tau}\hat{\sigma}_p dt+\frac{1}{n-1}\int^r_0 \Psi^{2q}e^{\tau}\hat{\sigma}_pdt+\frac{2}{n-1}\int^r_0H_k \Psi^{2q-1}e^{\tau}\hat{\sigma}_pdt\\
&\leq \int^r_0 \rho\cdot\Psi^{2q-2}e^{\tau}\hat{\sigma}_pdt\tag{A.8}\label{nA2}.
\end{align*}
Note that (\ref{nA1}) implies $\Psi(0)=0$ and $\Psi\geq 0$. Thus,
the first item of (\ref{nA2}) together with the definitions of $H$ and $\Psi$ furnishes
\begin{align*}
\int^r_0 \frac{\partial}{\partial t}\Psi\cdot\Psi^{2q-2}e^{\tau}\hat{\sigma}_p dt
\geq-\frac{1}{2q-1}\int^r_0 \Psi^{2q-1} (H_k+\Psi) e^\tau \hat{\sigma}_pdt,
\end{align*}
which together with (\ref{nA2}) yields
\begin{align*}
&\left(\frac{1}{n-1}-\frac{1}{2q-1} \right)\int^r_0  \Psi^{2q}e^{\tau}\hat{\sigma}_p dt+\left(\frac{2}{n-1}-\frac{1}{2q-1} \right)\int^r_0  \Psi^{2q-1}\,H_k\,e^{\tau}\hat{\sigma}_p dt\\
&\leq \int^r_0 \rho\, \Psi^{2q-2}\,e^{\tau}\hat{\sigma}_p dt.
\end{align*}
Since $q>n/2$,  the above inequality yields
\begin{align*}
&\left(\frac{1}{n-1}-\frac{1}{2q-1} \right)\int^r_0  \Psi^{2q}e^{\tau}\hat{\sigma}_p dt\leq \int^r_0 \rho \Psi^{2q-2}\,e^{\tau}\hat{\sigma}_p dt\\
\leq &\left(\int^r_0 \rho^q \,e^{\tau}\hat{\sigma}_p dt \right)^{1/q}\cdot \left(\int^r_0 \Psi^{2q}\,e^{\tau}\hat{\sigma}_p dt \right)^{1-1/q},
\end{align*}
which implies the claim (\ref{3.6newnew}) is true. Now  (\ref{3.6newnew})
 together with (\ref{3.5newnwew}) furnishes
\begin{align*}
&\frac{d}{dr}\left( \frac{\int_{B^+_p(r)}e^\tau d\mathfrak{m}}{v(n,k,r)}\right)\\
\leq& C_3(n,q,k,r)\cdot \left( \frac{\int_{B^+_p(r)}e^\tau d\mathfrak{m}}{v(n,k,r)} \right)^{1-\frac{1}{2q}}\cdot\left(k_p(q,k,r)\right)^{\frac{1}{2q}}\cdot v(n,k,r)^{-\frac{1}{2q}},\tag{A.9}\label{3.8newnew}
\end{align*}
where $C_3(n,q,k,r):=C_1(n,k,r)\,C_2(n,q)^{\frac1{2q}}$   and
\begin{align*}
k_p(q,k,r)&:=\int_{B^+_p(r)}\rho^q e^{\tau}d\mathfrak{m}=\int_{S_pM} d\nu_p(y)\int^r_0 \rho^q(t,y) e^{\tau(t,y)}\hat{\sigma}_p(t,y)dt .
\end{align*}

\noindent \textbf{Step 3.} Now set
\[
h(r):=\frac{\int_{B^+_p(r)}e^\tau d\mathfrak{m}}{v(n,k,r)},\ f(r):= C_3(n,q,k,r)\cdot\left(k_p(q,k,r)\right)^{\frac{1}{2q}}\cdot v(n,k,r)^{-\frac{1}{2q}}.
\]
Thus, (\ref{3.8newnew}) together with  (\ref{new2.3}) furnishes $h'\leq h^{1-\frac{1}{2q}}\cdot f(r)$.
Hence, for any $0<r_1\leq r_2\leq R$,
\[
2q\cdot h^{\frac{1}{2q}}(r_2)-2q \cdot h^{\frac{1}{2p}}(r_1)\leq \int^{r_2}_{r_1} f(s)ds.
\]
Since $C_3(n,q,k,r)$ and $k_p(q,k,r)$ are nondecreasing in $r$, one has
\begin{align*}
\int^{r_2}_{r_1} f(s)ds
\leq C_3(n,q,k,R)\cdot\int^R_0 v(n,k,s)^{-\frac{1}{2q}} ds\cdot\left(k_p(q,k,R)\right)^{\frac{1}{2q}}.
\end{align*}
Set
\[
C_4(n,q,k,R):=\frac{1}{2q}C_3(n,q,k,R)\cdot\int^R_0 v(n,k,s)^{-\frac{1}{2q}} ds.
\]
$C_4(n,q,k,R)$ is well-defined if $q>n/2$. Then we obtain
\[
h^{\frac1{2q}}(r_2)-h^{\frac1{2q}}(r_1)\leq C_4(n,q,k,R)\cdot\left(k_p(q,k,R)\right)^{\frac{1}{2q}},
\]
which together with Proposition \ref{uniformtorsion} yields
\[
h^{\frac1{2q}}(r_2)-h^{\frac1{2q}}(r_1)\leq C_4(n,q,k,R)\cdot\left(\delta^{2n}\cdot \mathcal {K}_{p,d\mathfrak{m}}(k,q,R)\right)^{\frac{1}{2q}},\tag{A.10}\label{**3}
\]
where
\[
\mathcal {K}_{p,d\mathfrak{m}}(k,q,R):=\int_{B^+_p(R)} \left((n-1)k-\underline{\mathbf{Ric}}(x)\right)_+^q d\mathfrak{m}(x)
\]

\noindent \textbf{Step 4.} It is not hard to see that (\ref{**3}) implies
\begin{align*}
&\left( \frac{v(n,k,r_1)}{v(n,k,r_2)}\right)^{\frac1{2q}}-\left(  \frac{\int_{B^+_p(r_1)}e^\tau d\mathfrak{m}}{\int_{B^+_p(r_2)}e^\tau d\mathfrak{m}} \right)^{\frac{1}{2q}}\\
\leq& C_4(n,q,k,R)\cdot \delta^{n/q}\cdot \mathcal {K}_{p,d\mathfrak{m}}(k,q,R)^{\frac1{2q}} \cdot \left( \frac{v(n,k,r_1)}{v(n,k,r_2)}\right)^{\frac1{2q}}\cdot\left(\frac{v(n,k,r_2)}{\int_{B^+_p(r_2)}e^\tau d\mathfrak{m}} \right)^{\frac{1}{2q}},
\end{align*}
which together with Proposition \ref{uniformtorsion} furnishes
\[
\delta^{-4n}(1-c)^{2q}\frac{v(n,k,r_1)}{v(n,k,r_2)}\leq \frac{\mathfrak{m}(B^+_p(r_1))}{\mathfrak{m}(B^+_p(r_2))},\tag{A.11}\label{3.100newnew}
\]
where
\[
c:= C_4(n,q,k,R)\cdot \delta^{n/q}\cdot \mathcal {K}_{p,d\mathfrak{m}}(k,q,R)^{\frac1{2q}} \cdot \left(\frac{v(n,k,r_2)}{\int_{B^+_p(r_2)}e^\tau d\mathfrak{m}} \right)^{\frac{1}{2q}}.\tag{A.12}\label{3.10newnew}
\]
In order to estimate $c$, we use (\ref{**3}) again ($r_2\rightarrow R$, $r_1\rightarrow r_2$) and obtain
\begin{align*}
&\left( \frac{v(n,k,r_2)}{\int_{B^+_p(r_2)}e^\tau d\mathfrak{m} } \right)^{\frac1{2q}}\leq \left(\left( \frac{\int_{B^+_p(R)}e^\tau d\mathfrak{m}}{v(n,k,R)} \right)^{\frac1{2q}}-C_4(n,q,k,R)\cdot \delta^{n/q}\cdot \mathcal {K}_{p,d\mathfrak{m}}(k,q,R)^{\frac1{2q}}   \right)^{-1}\\
\leq & \left( \frac{v(n,k,R)}{\int_{B^+_p(R)}e^\tau d\mathfrak{m}} \right)^{\frac1{2q}} \left( 1-C_4(n,q,k,R)\cdot v(n,k,R)^{\frac{1}{2q}}\cdot  \delta^{2n/q}\cdot \overline{\mathcal {K}}^{\frac1{2q}}_{d\mathfrak{m}}(k,q,R)  \right)^{-1}.
\end{align*}
Hence, there exists an $\varepsilon_1=\varepsilon_1(n,q,k,\delta,R)>0$ such that if $ \overline{\mathcal {K}}_{d\mathfrak{m}}(k,q,R)<\varepsilon_1$, then
\[
\left( \frac{v(n,k,r_2)}{\int_{B^+_p(r_2)}e^\tau d\mathfrak{m} } \right)^{\frac1{2q}}\leq   2\left( \frac{v(n,k,R))}{\int_{B^+_p(R)}e^\tau d\mathfrak{m}} \right)^{\frac1{2q}}.\tag{A.13}\label{3.11newnew}
\]
On the other hand,  by (\ref{3.10newnew}), (\ref{3.11newnew}) and Proposition \ref{uniformtorsion}, we can choose $\varepsilon_2=\varepsilon_2(n,q,k,\delta,R,\alpha)>0$ such that if $\overline{\mathcal {K}}_{d\mathfrak{m}}(k,q,R)<\varepsilon_2$,
\[
c\leq  C_5(n,q,k,R)\cdot \delta^{2n/q}\cdot  \varepsilon_2^{\frac1{2q}}\leq 1-\alpha^{\frac1{2q}}\delta^{\frac{2n}q},\tag{A.14}\label{3.14newnew}
\]
where $C_5(n,q,k,R):=2\,C_4(n,q,k,R)\cdot v(n,k,R)^{1/(2q)}$.   Choosing $\varepsilon:=\min\{\varepsilon_1,\varepsilon_2\}$, we obtain (\ref{3.4newnew}) by (\ref{3.100newnew}) and (\ref{3.14newnew}) directly.

\noindent \textbf{Step 5.} Now additionally suppose that  $F$ is Berwaldian. It follows from \cite{Sh1} that the S-curvature of $d\mathfrak{m}$ always vanishes, which implies that the distortion $\tau(\dot{\gamma}_y(r))$ only depends on $y$. In particular, we have
\[
H=\frac{\partial}{\partial r}\log \left[ \hat{\sigma}_p(r,y)e^{\tau(\dot{\gamma}_y(r))} \right]=\frac{\partial}{\partial r}\log \hat{\sigma}_p(r,y).
\]
Now we extend $\hat{\sigma}_p$   by setting $\hat{\sigma}_p(r,y):=0$ if $r\geq i_y$.
Thus, (\ref{nA0})
yields that, given $y\in S_pM$, on almost every $r>0$
\[
\frac{d}{dr}\left[\frac{\hat{\sigma}_p(r,y)}{\mathfrak{s}^{n-1}_k(r)} \right]\leq \Psi(r,y)\frac{\hat{\sigma}_p(r,y)}{\mathfrak{s}^{n-1}_k(r)}.
\]
The same argument yields the result.
\end{proof}

\begin{remark}
Theorem \ref{localver} can be extended to any measure  if the assumption $\Lambda_F\leq \delta^2$ is replaced by $a\leq \tau\leq  b$.
\end{remark}

We note that one can obtain some precompactness theorems and finiteness theorems in the Finsler setting by Theorem \ref{localver}.
We leave these statements to the interested reader.

\section{Acknowledgements}
This work was done while the author was a visiting scholar at
IUPUI.  The author thanks Professor Zhongmin Shen for his hospitality. This work was supported by NNSFC (No. 11501202, No. 11761058) and the grant of China Scholarship Council.

\bibliography{mybibfile}

\end{document}